\documentclass[12pt,reqno,a4paper]{article}

\setlength{\hoffset}{-1cm}
\setlength{\voffset}{-1.3cm}
\setlength{\textwidth}{15.75cm}
\setlength{\textheight}{22.275cm}

\usepackage{hyperref}
\usepackage{amsmath}
\usepackage{amsthm}
\usepackage{amssymb}
\usepackage{dsfont}
\usepackage{stmaryrd}
\usepackage{commath}
\usepackage{mathrsfs}
\usepackage{enumitem}
\usepackage{graphicx}
\evensidemargin0.5cm
\font\msbm=msbm10


\theoremstyle{plain}
\newtheorem{theorem}{Theorem}
\newtheorem{lemma}[theorem]{Lemma}
\newtheorem{corollary}[theorem]{Corollary}

\theoremstyle{definition}

\newtheorem{example}[theorem]{Example}
\newtheorem{remark}[theorem]{Remark}
\def\mathbb#1{\hbox{\msbm{#1}}}

\newcommand{\field}[1]{\ensuremath{\mathds{#1}}}

\newcommand{\N}{\field N}

\newcommand{\R}{\field R}

\newcommand{\sign}{{ \ensuremath{\mbox{\rm sign} \,} }}

\newcommand{\eps}{\varepsilon}
\newcommand{\id}{\mathrm{id}}

\newcommand{\be}{\begin{equation}}
\newcommand{\ee}{\end{equation}}
\newcommand{\beq}{\begin{eqnarray}}
\newcommand{\beqq}{\begin{eqnarray*}}
\newcommand{\eeq}{\end{eqnarray}}
\newcommand{\eeqq}{\end{eqnarray*}}

\newcommand{\Sphere}{\ensuremath{\mathds S^{d-1}}}
\newcommand{\Sph}{\ensuremath{\mathds S}}


\usepackage{ifdraft}
\usepackage[colorinlistoftodos, linecolor=red,bordercolor=red, backgroundcolor=red,textsize=small,textwidth=3cm]{todonotes}
\usepackage{xcolor}
\usepackage{setspace}
\usepackage[normalem]{ulem}
\usepackage{comment}
\usepackage{framed}

\ifdraft{\usepackage{showkeys}}{}

\newcounter{todocounter}
\newenvironment{todobox}[3][]
{\addtocounter{todocounter}{1}\todo[caption={\protect\hypertarget{todo\thetodocounter}{}#2 #3},noline, #1]{#2 \hyperlink{todo\thetodocounter}{$\uparrow$}}\color{red}\textbf{#3} }
{\color{black}}




\specialcomment{help}
{
\section*{Help}
}
{}
\ifdraft{}{\excludecomment{help}}

\specialcomment{changes}
{
\section*{Changes}
}
{\listoftodos[TODO's etc.]}
\ifdraft{}{\excludecomment{changes}}

\specialcomment{detail}
{\textcolor{blue!50!black}{/** }\begingroup\color{blue!50!black}}
{\endgroup\textcolor{blue!50!black}{**/} }
\ifdraft{}{\excludecomment{detail}}

\newcommand{\old}[1]{\textcolor{red}{\sout{\ifdraft{#1}{}}}}
\newcommand{\new}[1]{\ifdraft{\textcolor{green!50!black}{#1}}{#1}}


\hypersetup{
   pdfauthor={Aicke Hinrichs, Sebastian Mayer},
   pdfkeywords={entropy numbers, spheres, p-Banach spaces},
   pdftitle={Entropy Numbers of Spheres in Banach and quasi-Banach spaces}
}

\begin{document}

\title{Entropy Numbers of Spheres in Banach and quasi-Banach spaces}

\author{Aicke Hinrichs\\
Institut f\"ur Analysis, Johannes Kepler Universit\"at Linz\\
Altenbergerstra\ss e 69, 4040 Linz, Austria\\
email: aicke.hinrichs@jku.at\\
\qquad
\\
Sebastian Mayer\\
Institut f\"ur numerische Simulation\\
Endenicher Allee 62, 53115 Bonn, Germany\\
email: mayer@ins.uni-bonn.de
}

\maketitle

\begin{abstract}
We prove sharp upper bounds on the entropy numbers $e_k(\Sphere_p,\ell_q^d)$  of the $p$-sphere in $\ell_q^d$ in the case $k \geq d$ and $0< p \leq q \leq \infty$.
In particular, we close a gap left open in recent work of the second author, T. Ullrich and J. Vybiral.
We also investigate generalizations to spheres of general finite-dimensional quasi-Banach spaces.
\end{abstract}

\begin{help}

\begin{itemize}
 \item \new{Piece of text which is new in this version.}
 \item \old{Piece of text that should be removed for some reason.}
 \item \textcolor{blue!50!black}{/** more detailed descriptions; internal comments; $\dots$ **/}
 \item Remove the \texttt{draft} option to hide working draft annotations.
\end{itemize}

\end{help}

\begin{changes}
Changes to draft 3.
\begin{itemize}
\item Added a proof for Corollary \ref{res:sym}.
\item Section \ref{sec:general_spheres} now contains concrete examples (Lorentz and Orlicz norms).
\end{itemize}
\end{changes}

\section{Introduction}

Entropy numbers are a central concept in approximation theory. 
They quantify the compactness of a given set with respect to some reference space. 
For $K$  being a subset of a finite-dimensional (quasi-)Banach space $Y$, the $k$-th (dyadic) entropy number $e_k(K,Y)$ is defined as
\be\label{defi:entropy}
    e_k(K,Y)=\min\Big\{\varepsilon>0:K \subset \bigcup\limits_{j=1}^{2^{k-1}} (x_j+\varepsilon   B_Y)
    \text{ for some } x_1,\dots,x_{2^{k-1}}\in Y\Big\}\,,
\ee
where $B_Y$ denotes the closed unit ball in the space $Y$.
A detailed discussion and historical remarks can be found in the monographs \cite{CaSt90,EdTr96}.

Recall the (quasi-)norms $\|x\|_p := (\sum_{i=1}^d |x_i|^p)^{1/p}$ for $0 < p < \infty$ and $\|x\|_{\infty} := \max_{i=1}^d |x_i|$. 
The entropy numbers $e_k(B_p^d, \ell_q^d)$ of the closed unit balls $B_p^d = \{ x \in \R^d: \|x\|_p \leq 1\}$ in $\ell_q^d = (\R^d,\|\cdot\|_q)$ for $0<p,q\leq \infty$ 
are well-understood for more than a decade now \cite{Ku01,Sch84,Tr97}. For the reader's convenience, we restate the result. Recall that $f \lesssim g$ means that there is a constant $C > 0$ such that $f \leq C g$; the notation $f \asymp g$ means that $g \lesssim f$ and $f \lesssim g$.
\begin{lemma}\label{lem:schuett} Let $0<p\leq q \leq \infty$ and let $k$ and $d$ be natural numbers. Then,
$$
e_k(\bar B_p^d,\ell_q^d) \asymp \left\{
\begin{array}{rcl}1&:&1\leq k \leq \log(d),\\
\Big(\frac{\log(1+d/k)}{k}\Big)^{1/p-1/q}&:&\log(d)\leq k\leq d,\\
2^{-k/d}d^{1/q-1/p}&:&k\geq d\,.
\end{array}
\right.
$$
The constants behind ``$\asymp$'' do neither depend on $k$ nor on $d$. They only depend on the parameters $p$ and $q$.
\end{lemma}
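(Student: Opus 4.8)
\noindent This is the classical estimate of Sch\"utt, K\"uhn, and Triebel, and the plan is to prove matching upper and lower bounds in each of the three ranges of $k$, which rest on three different mechanisms. Write $N(A,\varepsilon B)$ for the least number of translates of $\varepsilon B$ covering a set $A$, so that $e_k(\bar B_p^d,\ell_q^d)\le\varepsilon$ as soon as $N(\bar B_p^d,\varepsilon\bar B_q^d)\le 2^{k-1}$, and $e_k(\bar B_p^d,\ell_q^d)>\varepsilon$ as soon as $N(\bar B_p^d,\varepsilon\bar B_q^d)>2^{k-1}$. I will use repeatedly that for $p\le q$ one has $d^{1/q-1/p}\bar B_q^d\subseteq\bar B_p^d\subseteq\bar B_q^d$ and that Stirling's formula gives $\big(\mathrm{vol}(\bar B_p^d)/\mathrm{vol}(\bar B_q^d)\big)^{1/d}\asymp d^{1/q-1/p}$. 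The right inclusion already yields $e_1\le 1$, hence $e_k\le 1$ for every $k$; this is the upper bound in the first range.

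\noindent For the lower bounds I would argue as follows. \emph{(i)} When $1\le k\le\log d$, the $2d$ vectors $\pm e_i\in\bar B_p^d$ are pairwise $\ell_q$-separated by a constant depending only on $q$, so a cover by balls of half that radius needs at least $2d$ of them; thus $e_k\gtrsim 1$ whenever $2^{k-1}<2d$. \emph{(ii)} When $\log d\le k\le d$, I would fix an integer $m\asymp k/\log(1+d/k)$ and, by a greedy counting argument, a family $\mathcal F$ of $m$-element subsets of $\{1,\dots,d\}$ with pairwise intersections at most $m/2$ and $\log_2|\mathcal F|\gtrsim m\log(d/m)\asymp k$; the sphere vectors $m^{-1/p}\sum_{i\in A}e_i$, $A\in\mathcal F$, are pairwise at $\ell_q$-distance $\ge m^{1/q-1/p}$, which forces $e_k\gtrsim m^{1/q-1/p}\asymp(\log(1+d/k)/k)^{1/p-1/q}$; near the endpoint $k\asymp d$ the greedy count becomes too weak and one falls back on \emph{(iii)}. \emph{(iii)} When $k\ge d$, the bound $N(\bar B_p^d,\varepsilon\bar B_q^d)\ge\mathrm{vol}(\bar B_p^d)/\mathrm{vol}(\varepsilon\bar B_q^d)\asymp(d^{1/q-1/p}/\varepsilon)^d$ gives $e_k\gtrsim d^{1/q-1/p}2^{-k/d}$.

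\noindent For the upper bound when $k\ge d$ I would first reduce to the single index $k=d$ via multiplicativity, $e_k(\mathrm{id}\colon\ell_p^d\to\ell_q^d)\le e_{k-d+1}(\mathrm{id}\colon\ell_p^d\to\ell_p^d)\,e_d(\mathrm{id}\colon\ell_p^d\to\ell_q^d)$, together with the standard fact $e_j(\mathrm{id}\colon\ell_p^d\to\ell_p^d)\asymp 2^{-j/d}$; this leaves the claim $e_d(\bar B_p^d,\ell_q^d)\lesssim d^{1/q-1/p}$. Factoring the identity through $\ell_\infty^d$, using $\|\mathrm{id}\colon\ell_\infty^d\to\ell_q^d\|=d^{1/q}$, reduces this further to $e_d(\bar B_p^d,\ell_\infty^d)\lesssim d^{-1/p}$. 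Here a naive volume estimate does \emph{not} suffice: $\big(\mathrm{vol}(\bar B_p^d)/\mathrm{vol}(\bar B_\infty^d)\big)^{1/d}$ can exceed $d^{-1/p}$ by a factor $\ge 2$ (already for $p=1$), so one cannot cover $\bar B_p^d$ by $2^{d-1}$ cubes using only the inclusion $\tfrac{\varepsilon}{2}\bar B_\infty^d\subseteq\bar B_p^d$. Instead one uses the Steiner-type identity $\mathrm{vol}(\bar B_p^d+\varepsilon\bar B_\infty^d)=\sum_{j=0}^d\binom dj(2\varepsilon)^j\mathrm{vol}(\bar B_p^{d-j})$, verifies that for $\varepsilon\asymp d^{-1/p}$ the estimate $N(\bar B_p^d,\varepsilon\bar B_\infty^d)\le\mathrm{vol}(\bar B_p^d+\tfrac{\varepsilon}{2}\bar B_\infty^d)/\mathrm{vol}(\tfrac{\varepsilon}{2}\bar B_\infty^d)$ stays below $2^{d-1}$, and concludes. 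This step --- also where the quasi-Banach case $0<p<1$, with $\bar B_p^d$ non-convex, needs additional care --- is the main obstacle.

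\noindent For the upper bound in the middle range I would use coordinate compression: given $x\in\bar B_p^d$, keep its $m$ largest coordinates in modulus; since the decreasing rearrangement satisfies $|x_{(j)}|\le j^{-1/p}$, the discarded tail has $\ell_q$-norm $\lesssim m^{1/q-1/p}$ (to be read as $m^{-1/p}$ when $q=\infty$). It remains to cover the $m$-sparse vectors of $\bar B_p^d$ to accuracy $\asymp m^{1/q-1/p}$ in $\ell_q$: there are $\binom dm\le(ed/m)^m$ admissible supports, and on each an $\ell_q$-net of $\bar B_p^m$ of that mesh has cardinality $2^{O(m)}$ by the $k\ge m$ bound just established in dimension $m$; hence the number of balls is $2^{O(m\log(1+d/m))}$, and taking $m$ as large as possible subject to this being $\le 2^{k-1}$ forces $m\asymp k/\log(1+d/k)$ and accuracy $\asymp m^{1/q-1/p}\asymp(\log(1+d/k)/k)^{1/p-1/q}$. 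What remains is routine calibration: checking $m\log(1+d/m)\asymp k$ and $\log(d/m)\asymp\log(1+d/k)$ with constants uniform over $\log d\le k\le d$ including the endpoints, and that the three ranges overlap.
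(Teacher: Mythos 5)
The paper does not prove Lemma~\ref{lem:schuett} at all; it is stated as a known result with references to K\"uhn, Sch\"utt, and Triebel, so there is no internal proof to compare against. Judged on its own, your outline hits the standard three mechanisms (separation of $\pm e_i$ for small $k$, combinatorial sparse nets for the intermediate range, volume/Minkowski estimates for $k\ge d$), and the reduction via multiplicativity and factorization through $\ell_\infty^d$ is the right skeleton. The logical structure also avoids circularity, since the intermediate-range upper bound only calls on the third-range bound in dimension $m<d$, which has already been established for all dimensions.

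That said, the proposal is a sketch rather than a proof, and the one place you flag as ``the main obstacle'' --- the case $0<p<1$ for $k\ge d$ --- deserves a sharper comment. Your worry about the Steiner-type expansion is in fact unfounded: the identity
\[
 \mathrm{vol}\bigl(\bar B_p^d + \varepsilon \bar B_\infty^d\bigr) \;=\; \sum_{j=0}^{d} \binom{d}{j}\,(2\varepsilon)^j\,\mathrm{vol}\bigl(\bar B_p^{d-j}\bigr)
\]
does not require convexity of $\bar B_p^d$; it only requires that $\bar B_p^d$ be unconditional and ``downward closed'' in each orthant, i.e.\ $|y|\le|x|$, $x\in\bar B_p^d$ $\Rightarrow$ $y\in\bar B_p^d$. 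Under that hypothesis the Minkowski sum with a cube decomposes coordinate-wise exactly as in the convex case, and the coefficient $\binom{d}{j}$ counts the coordinate hyperplanes $H_S$ with $|S|=j$, all of which yield the same section $\bar B_p^{d-j}$. Also note the volume comparison $N\le \mathrm{vol}(K+\tfrac{\varepsilon}{2}\bar B_\infty^d)/\mathrm{vol}(\tfrac{\varepsilon}{2}\bar B_\infty^d)$ itself never used convexity of $K$ (only symmetry and convexity of the cube), so the route you outline is viable for all $0<p\le\infty$ once the combinatorial sum is bounded by $2^{k-1}$ with Stirling; that calibration is fussy but routine. A second small gap is implicit in the intermediate-range lower bound: the greedy family $\mathcal F$ with pairwise overlaps $\le m/2$ only gives $\log_2|\mathcal F|\gtrsim m\log(d/m)$ when $d/m$ is bounded away from $1$, so you should state the range $m\le cd$ explicitly and observe that the $k\asymp d$ endpoint is covered by the volume lower bound, which you do mention but should make precise. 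None of these are wrong ideas --- they are gaps typical of a sketch --- but the claim that naive volume fails while your Steiner refinement succeeds should be accompanied by the observation that the refinement works verbatim in the quasi-convex case, which would turn your ``main obstacle'' into a non-issue.
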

 
Regarding $p$-spheres $\Sphere_p = \{ x \in \R^d: \|x\|_p = 1\}$, one would expect a similar behavior of the entropy numbers $e_k(\Sphere_p,\ell_q^d)$. A rigorous proof, however, has been provided just recently in \cite{MUV13}. Let us restate this result, as well. 
\begin{lemma}\label{res:muv}
Let $d\in \N$, $d\geq 2$, $0<p\leq q \leq \infty$ and $\bar p = \min\{1,p\}$. Then,
\begin{enumerate}[label=(\roman*)]
\item \[ 2^{-k/(d-1)} d^{1/q-1/p} \; \lesssim \; e_k(\Sphere_p, \ell_q^d) \; \lesssim \; 2^{-k/(d-\bar p)} d^{1/q-1/p} \quad \text{for } k  \geq d. \]
\item
$$
e_k(\Sphere_p,\ell_q^d) \asymp \left\{
\begin{array}{ccl}1&\text{for}&1\leq k \leq \log d,\\
\Big(\frac{\log(1+d/k)}{k}\Big)^{1/p-1/q}&\text{for}&\log d\leq k\leq d.
\end{array}
\right.
$$
\end{enumerate}
\end{lemma}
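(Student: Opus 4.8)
The plan is to derive both lower bounds from a single construction, to read off the upper bounds in part (ii) (and a weak version of (i)) from the inclusion $\Sphere_p\subseteq B_p^d$, and to concentrate the real work on the sharp upper bound for $k\ge d$.

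\emph{Lower bounds.} I would exhibit a substantial piece of $\Sphere_p$ as a graph over a $(d-1)$-dimensional $p$-ball. The map
\[
\psi\colon \tfrac12 B_p^{d-1}\longrightarrow\Sphere_p,\qquad \psi(u)=\bigl(u,(1-\|u\|_p^p)^{1/p}\bigr),
\]
is well defined (as $\|u\|_p\le\tfrac12$ forces $\|u\|_p^p\le1$), injective, and lands in $\Sphere_p$ since $\|\psi(u)\|_p^p=\|u\|_p^p+(1-\|u\|_p^p)=1$; moreover, comparing the first $d-1$ coordinates, $\|\psi(u)-\psi(v)\|_q\ge\|u-v\|_q$, so $\psi$ does not decrease $\ell_q$-distances. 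Hence any covering of $\Sphere_p$ by $2^{k-1}$ balls of radius $\eps$ in $\ell_q^d$ pulls back to a covering of $\tfrac12 B_p^{d-1}$ by $2^{k-1}$ sets of $\ell_q^{d-1}$-diameter at most $2\eps$, and therefore
\[
e_k(\Sphere_p,\ell_q^d)\;\gtrsim\; e_k\bigl(\tfrac12 B_p^{d-1},\ell_q^{d-1}\bigr)\;\asymp\; e_k\bigl(B_p^{d-1},\ell_q^{d-1}\bigr).
\]
Inserting Lemma \ref{lem:schuett} in dimension $d-1$, and using $\log(1+(d-1)/k)\asymp\log(1+d/k)$, $(d-1)^{1/q-1/p}\asymp d^{1/q-1/p}$, and that a bounded shift of the index $k$ is harmless, yields both lower bounds simultaneously.

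\emph{Upper bounds.} The bounds in part (ii), and also $e_k(\Sphere_p,\ell_q^d)\lesssim 2^{-k/d}d^{1/q-1/p}$ for $k\ge d$, follow at once from $\Sphere_p\subseteq B_p^d$ and Lemma \ref{lem:schuett}. To upgrade the exponent from $d$ to $d-\bar p$ in the range $k\ge d$, I would first reduce to a single spherical cap. Since $\Sphere_p=\bigcup_{j,\,\sigma=\pm1}\{x\in\Sphere_p:\sigma x_j=\|x\|_\infty\}$ is a union of $2d$ pieces that are mapped to one another by the (isometric) signed coordinate permutations, it suffices to estimate $e_k(P,\ell_q^d)$ for $P:=\{x\in\Sphere_p:x_d=\|x\|_\infty\}$, at the cost of replacing $k$ by $k-\lceil\log 2d\rceil$; such $O(\log d)$ shifts of the index contribute only factors $(2d)^{O(1/(d-\bar p))}=1+o(1)$ and are negligible for $k\ge d$. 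The cap $P$ is the graph of $\phi(u)=(1-\|u\|_p^p)^{1/p}$ over a set $D\subseteq B_p^{d-1}$ on which $\phi\ge d^{-1/p}$, and I would cover it by a two--scale product: for a parameter $\eps$, take a near-optimal $\eps$-net $\{u_j\}$ of $D$ in $\ell_q^{d-1}$ (its cardinality being controlled, via Lemma \ref{lem:schuett}, by that of an $\eps$-net of $B_p^{d-1}$), and over each $u_j$ subdivide the fibre $\{x_d:(u,x_d)\in P,\ \|u-u_j\|_q\le\eps\}$ into $\lesssim\omega(\eps)/\eps$ subintervals of length $\eps$, where $\omega$ is an $\|\cdot\|_q$-modulus of continuity for $\phi$ on $D$. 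From the identity $\phi(u)^p-\phi(v)^p=\|v\|_p^p-\|u\|_p^p$ and the lower bound $\phi\ge d^{-1/p}$ one arrives at an estimate of the form $\omega(t)\lesssim C_p\,d^{\alpha}\,t^{\bar p}$ for a suitable exponent $\alpha$; when $p\ge 1$ this is Lipschitz ($\bar p=1$), the fibres cost only a bounded-in-$\eps$ factor, and the exponent one obtains is already the optimal $d-1$, whereas for $p<1$ the Hölder exponent $\bar p=p<1$ turns the total number of balls into order $\eps^{-(d-1)}\cdot\eps^{\bar p-1}=\eps^{\bar p-d}$, and optimising $\eps$ and collecting the powers of $d$ produces $e_k(P,\ell_q^d)\lesssim 2^{-k/(d-\bar p)}d^{1/q-1/p}$.

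\emph{The main obstacle} is the uniform control of $\omega$ on $D$: for $p<1$ the gradient $|\partial_i\phi|=\phi^{1-p}|u_i|^{p-1}$ blows up as $u_i\to 0$, so $\phi$ is not uniformly Hölder on $D$. I would deal with this by a secondary dyadic decomposition of $D$ according to the size of $\min_i|u_i|$: on the part where every coordinate exceeds a threshold $\eta$, $\phi$ is genuinely Lipschitz with an $\eta$-dependent constant, and the remaining union of thin slabs $\{|u_i|<\eta\}$ is handled by recursion --- fixing one small coordinate turns $P$ into (an $\eta$-neighbourhood of) a cap of a $(d-1)$-dimensional $p$-sphere --- the recursion terminating after at most $d$ steps. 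Making sure that the $d$-dependent constants accumulated along this decomposition do not degrade the final prefactor $d^{1/q-1/p}$ to a worse power of $d$ is the delicate bookkeeping, and it is precisely the slack one concedes here that is responsible for the loss of the optimal exponent $d-1$ to $d-\bar p$ when $p<1$.
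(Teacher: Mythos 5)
The lemma is cited in the paper from [MUV13] and is never reproved there; the paper only supplies a cleaner proof of the lower bounds (Theorem~\ref{thm:lbp}: the orthogonal projection $P_1$ onto $H_1$ sends $\Sphere_p$ onto $B_p^{d-1}$ and $\eps B_q^d$ onto $\eps B_q^{d-1}$, giving the exact inequality $e_k(\Sphere_p,\ell_q^d)\ge e_k(B_p^{d-1},\ell_q^{d-1})$), and observes that the part~(ii) upper bounds come from $\Sphere_p\subset B_p^d$, as you also do. Your lower bound via the graph map $\psi(u)=\bigl(u,(1-\|u\|_p^p)^{1/p}\bigr)$ and a covering pull-back is a valid but heavier variant: it costs you the restriction to $\tfrac12 B_p^{d-1}$, the diameter-to-ball step, and a bounded index shift, all of which the projection argument avoids for free. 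Your constructive two-scale covering for the $k\ge d$ upper bound is a genuine alternative to the volume argument in [MUV13], and it does land on $2^{-k/(d-\bar p)}d^{1/q-1/p}$; compared with what this paper actually proves in Theorem~\ref{res:entropy_sphere}, it is weaker because lifting perpendicularly to $H_d$ inherits the H\"older-$p$ (rather than Lipschitz) modulus of $\phi$ when $p<1$, whereas the paper shifts along the diagonal $e\in\{-1,1\}^d$ via $\Delta_e^p$, which Lemma~\ref{lem:xx} shows is $\ell_\infty$-Lipschitz for every $p$, yielding the sharp exponent $d-1$.

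The one genuine flaw is your ``main obstacle'' paragraph: it contradicts your own earlier line and is unnecessary. For $0<p\le1$ the map $\phi$ \emph{is} uniformly H\"older on $D$, with no singularity along coordinate hyperplanes. Indeed, from $\phi(u)^p-\phi(v)^p=\|v\|_p^p-\|u\|_p^p$ and $|a^p-b^p|\le|a-b|^p$ (for $a,b\ge0$, $p\le1$) one gets $|\phi(u)^p-\phi(v)^p|\le\|u-v\|_p^p$, and since $0\le\phi\le1$ the mean value theorem gives $|\phi(u)^p-\phi(v)^p|\ge p\,|\phi(u)-\phi(v)|$, so
\[
|\phi(u)-\phi(v)|\;\le\;\tfrac{1}{p}\,\|u-v\|_p^p\;\le\;\tfrac{1}{p}\,(d-1)^{1-p/q}\,\|u-v\|_q^{\,p},
\]
a uniform H\"older-$p$ bound. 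The blow-up of the pointwise gradient $|\partial_i\phi|=\phi^{1-p}|u_i|^{p-1}$ at $u_i=0$ no more obstructs this than the unbounded derivative of $t\mapsto t^p$ obstructs its H\"older-$p$ continuity. Once this is noted, the recursive dyadic decomposition you sketch (and never carry out) can simply be deleted: the fibre count $\omega(\eps)/\eps\lesssim d^{1-p/q}\eps^{\bar p-1}$ feeds directly into the product cover, the powers of $d$ accumulate to $d^{1/p-1/q}$ in the count of pieces, and since $d^{c/(d-\bar p)}\le 2^{c}$ for $d\ge2$ this is absorbed into the implicit constant when you optimize the scale, giving the stated $2^{-k/(d-\bar p)}d^{1/q-1/p}$. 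As written, the last paragraph misdiagnoses the source of the loss and leaves the bookkeeping you call ``delicate'' undone; with the correction above your argument is complete.
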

The proof largely mimics the corresponding proof for $p$-unit balls, including the volume arguments which are utilized in the case $k \geq d$. Once $p<1$ however, the volume arguments get too coarse for the upper bound. They do no longer lead to the correct order of decay; instead of $2^{-k/(d-1)}$, we only see $2^{-k/(d-p)}$.

In this work, we present an alternative proof for the case $k \geq d$ which gives the correct order of decay $1/(d-1)$. In essence, the proof relies on a scheme to construct a covering of $\Sphere_p$ from a covering of $B_p^{d-1}$. In Section \ref{sec:p-spheres}, we prove 
\begin{theorem} \label{thm:p-sphere}
Let $d\in \N$, $d\geq 2$, $0<p\leq q \leq \infty$. Then,
\[  e_k(\Sphere_p, \ell_q^d) \; \asymp \; 2^{-k/(d-1)} d^{1/q-1/p}, \quad \text{for } k  \geq d.  \]
\end{theorem}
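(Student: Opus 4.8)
The lower bound $2^{-k/(d-1)}d^{1/q-1/p}\lesssim e_k(\Sphere_p,\ell_q^d)$ is already contained in Lemma~\ref{res:muv}(i), and so is the matching upper bound whenever $p\ge 1$, since then $\bar p=1$ and the upper estimate of Lemma~\ref{res:muv}(i) reads $e_k(\Sphere_p,\ell_q^d)\lesssim 2^{-k/(d-1)}d^{1/q-1/p}$. Thus the only thing left to prove is the upper bound for $0<p<1$ and $k\ge d$ --- precisely the regime in which the volume arguments of \cite{MUV13} become too crude. Fix $0<p<1$ and write $\tau:=2^{-k/(d-1)}d^{1/q-1/p}$ for the target radius.

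By invariance of $\Sphere_p$ under sign changes and permutations of the coordinates it suffices to cover the half-sphere $\Sphere_p\cap\{x_d\ge 0\}$ (this costs only a factor $2$ in the number of balls). The plan is to parametrise it by the chart $\Psi\colon B_p^{d-1}\to\Sphere_p\cap\{x_d\ge0\}$, $\Psi(y)=(y,\phi(y))$ with $\phi(y):=(1-\|y\|_p^p)^{1/p}$, and to note that $\|\Psi(y)-\Psi(z)\|_q\asymp\|y-z\|_q+|\phi(y)-\phi(z)|$, so that the task reduces to covering $B_p^{d-1}$ in the pull-back metric $\rho(y,z):=\|y-z\|_q+|\phi(y)-\phi(z)|$, i.e.\ to showing $e_k(B_p^{d-1},\rho)\lesssim\tau$ for $k\ge d$. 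The point of departure is the elementary estimate $|\phi(y)-\phi(z)|\le\tfrac1p\bigl|\,\|y\|_p^p-\|z\|_p^p\,\bigr|\le\tfrac1p\sum_i\bigl|\,|y_i|^p-|z_i|^p\,\bigr|$, obtained from the mean value theorem for $t\mapsto(1-t)^{1/p}$ (whose derivative is bounded by $1/p$ on $[0,1]$ because $1/p>1$).

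To pass from an $\ell_q^{d-1}$-covering of $B_p^{d-1}$ to a $\rho$-covering I would lift a Schütt-type multiscale covering: split the coordinates of $y$ into dyadic magnitude layers, take on each layer a net whose mesh is adapted to that layer (fine on the few large coordinates, coarse on the many small ones), and then subdivide the range of $\phi$ over each resulting cell $Q$ into $O(1+L_Q/\tau)$ intervals of length $\lesssim\tau$, where $L_Q$ is the oscillation of $\phi$ on $Q$. On a layer of mesh $\mu$ carrying only small coordinates one has $\bigl|\,|y_i|^p-|z_i|^p\,\bigr|\le\mu^p$ (subadditivity of $t\mapsto t^p$), whereas on a layer of large coordinates one uses instead $\bigl|\,|y_i|^p-|z_i|^p\,\bigr|\lesssim\max(|y_i|,|z_i|)^{p-1}|y_i-z_i|$ and gains the corresponding factor; choosing the layer meshes so that each layer contributes at most $\tau$ both to $\|y-z\|_q$ and to $|\phi(y)-\phi(z)|$ should keep the total number of cells --- summed over layers and over the choices of which coordinates are ``large'' --- of the order of the $\tau$-covering number of $B_p^{d-1}$ in $\ell_q^{d-1}$, which by Lemma~\ref{lem:schuett} is $\asymp 2^{k}$. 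For $k$ of the order of $d$ one does not run this construction at all but simply invokes Lemma~\ref{res:muv}(i), since there $2^{-k/(d-\bar p)}\asymp 2^{-k/(d-1)}$. The neighbourhoods of the degeneracy set of $\phi$ --- the boundary of $B_p^{d-1}$ and its coordinate hyperplanes $\{y_i=0\}$, which under $\Psi$ correspond to the parts of $\Sphere_p$ lying close to the lower-dimensional spheres $\Sphere_p\cap\{x_d=0\}$ and $\Sphere_p\cap\{x_i=0\}$ --- are peeled off and handled by induction on $d$, the base case $d=2$ being immediate because $\Sphere_p\subset\R^2$ is then a closed curve of finite $\ell_q^2$-length, whence $e_k\lesssim 2^{-k}$.

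The main obstacle is exactly the non-Lipschitz behaviour of $\Psi$ for $p<1$: near the degeneracy set the modulus of continuity of $\phi$ with respect to $\|\cdot\|_q$ is merely of Hölder type $t\mapsto t^p$, so the $\Psi$-image of a $\tau$-ball in $B_p^{d-1}$ can have $\ell_q^d$-diameter as large as $\tau^p\gg\tau$, and a careless lift of an optimal covering of $B_p^{d-1}$ loses a polynomial factor $d^{1/p-1/q}$ in the number of balls --- which is precisely what the volume argument of \cite{MUV13} already pays. Arranging the layer meshes and the face-induction so that neither the $\tau^p$-Hölder stretch nor the $d^{1/p-1/q}$-loss survives in the final count is the technical heart of the proof; the remainder is bookkeeping against the entropy numbers of $B_p^{d-1}$ recorded in Lemma~\ref{lem:schuett}.
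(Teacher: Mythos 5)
Your reduction to the upper bound for $0<p<1$, $k\ge d$ is right, and you correctly diagnose the obstruction: the graph parametrisation $\Psi(y)=(y,\phi(y))$ with $\phi(y)=(1-\|y\|_p^p)^{1/p}$ is only H\"older of exponent $p$ near the coordinate hyperplanes and near $\partial B_p^{d-1}$, and a naive push-forward of an optimal covering of $B_p^{d-1}$ through $\Psi$ loses a polynomial factor in $d$ --- which is exactly the loss in \cite{MUV13}. But having named this as ``the technical heart of the proof,'' you leave it unproved: the multiscale/layered covering and the induction over faces are described only in outline, and the crucial claim --- that the total cell count ``should'' remain of order $2^k$ once the layer meshes are ``arranged'' so neither the $\tau^p$ stretch nor the $d^{1/p-1/q}$ factor survives --- is precisely the statement that needs a proof. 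As it stands the argument for $0<p<1$ is a programme, not a proof, and it is unclear whether your layering can actually be made to close near the degeneracy set without reintroducing the loss.

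The paper sidesteps the H\"older problem entirely by choosing a different chart. Instead of the vertical projection onto a coordinate hyperplane (which is what inverts your $\Psi$), one decomposes $\Sphere_p$ into the pieces $\Sphere_p\cap Q_e\cap C_i$ (orthant $e$, coordinate $i$ smallest in modulus) and parametrises each piece by the \emph{diagonal} shift $\Delta_e^p\colon B_p^d\cap Q_e\cap H_i\to\Sphere_p\cap Q_e\cap C_i$, $x\mapsto x+s(x)e$, which pushes $x$ along the direction $e$ until it hits the sphere. Restricting to $C_i$ ensures $s(x)\in[0,1]$, and the monotonicity argument of Lemma~\ref{res:Lip2} shows $s$ is $1$-Lipschitz in $\ell_\infty$, hence $\Delta_e^p$ is $2$-Lipschitz in $\ell_\infty$ (Lemma~\ref{lem:xx}) for \emph{all} $0<p\le\infty$, with no degradation as $p\downarrow 0$ and no bad behaviour near faces. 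One then lifts an $\ell_\infty$-covering of each $(d-1)$-dimensional slice $B_p^d\cap Q_e\cap H_i$ (a subset of $B_p^{d-1}$), paying only the factor $2^d\cdot d$ for the union over $(e,i)$ --- i.e.\ an index shift of $d+\lceil\log d\rceil$ --- and finishes with H\"older's inequality for the factorisation $\ell_\infty^d\to\ell_q^d$. This gives the clean bound $e_k(\Sphere_p,\ell_\infty^d)\le 2\,e_{k-d-\lceil\log d\rceil}(B_p^{d-1},\ell_\infty^{d-1})$, which is exactly what your sketch is trying to reconstruct by multiscale bookkeeping. You would do well to look for such a Lipschitz chart before attempting the Schütt-type layering; the latter may well work, but you have not shown that it does, and it is in any case far heavier machinery than the problem requires.
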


Finally in Section \ref{sec:general_spheres}, we generalize the methods presented in Section \ref{sec:p-spheres} to study entropy numbers $e_k(\Sph_X,Y)$ for general finite-dimensional quasi-Banach spaces $X$ and $Y$.

\paragraph{Notations}
For $e \in \{-1,1\}^d$ a sign vector, we denote the associated orthant by $Q_e :=\big\{ x \in \R^d: 0 \leq \sign e_j \, x_j \leq 1 \big\}$. Furthermore, by $H_i$ we denote the hyperplane where the $i$-th coordinate of every vector equals $0$ and  $C_i = \{ x \in \R^d: \; |x_i| \leq |x_j| \text{ for } j=1,\dots,d \}$. Whenever we write $|x|\leq |y|$, then this is meant  coordinate-wise: $|x_j| \leq |y_j|$ for each $j=1,\dots,d$.

\section{Entropy numbers of $p$-spheres}\label{sec:p-spheres}
Let us start this section with a simple proof that the entropy numbers of the $p$-sphere $\Sphere_p$ are always bounded from below by the respective entropy numbers of the $p$-unit ball $  B_p^{d-1}$.
The known behaviour of the latter immediately implies all the lower bounds in Lemma \ref{res:muv} and Theorem \ref{thm:p-sphere}.

\begin{theorem}
\label{thm:lbp}
Let $0<p,q \leq \infty$. For all natural numbers $k$, we have
\[
 e_k(\Sphere_p, \ell_q^d) \geq e_k(  B_p^{d-1}, \ell_q^{d-1}).
\]
\end{theorem}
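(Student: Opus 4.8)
The plan is to exhibit the $(d-1)$-dimensional ball $B_p^{d-1}$ as the image of a subset of $\Sphere_p$ under a coordinate projection of norm at most one, and then to invoke the standard behaviour of entropy numbers under such maps.

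First I would introduce the coordinate projection $P\colon\R^d\to\R^{d-1}$, $P(x_1,\dots,x_d)=(x_1,\dots,x_{d-1})$. For every $0<q\le\infty$ one has $\|Px\|_q\le\|x\|_q$, so $P$ maps the closed unit ball of $\ell_q^d$ into the closed unit ball of $\ell_q^{d-1}$. Next I would describe an explicit section of $\Sphere_p$ lying over $B_p^{d-1}$: for $0<p<\infty$ set $S=\{(y,t): y\in B_p^{d-1},\ t=(1-\|y\|_p^p)^{1/p}\}$ (for $p=\infty$ take instead the face $\{(y,1): y\in B_\infty^{d-1}\}$). In either case $S\subseteq\Sphere_p$ and $P(S)=B_p^{d-1}$, the latter because $y\mapsto(y,t)$ is a right inverse of $P$ on $S$.

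With these two ingredients the conclusion is immediate. Fix $k$ and suppose $\Sphere_p\subseteq\bigcup_{j=1}^{2^{k-1}}(x_j+\varepsilon B_{\ell_q^d})$ for some $x_1,\dots,x_{2^{k-1}}\in\R^d$. Applying $P$ and using $P(S)=B_p^{d-1}$ together with $P(B_{\ell_q^d})\subseteq B_{\ell_q^{d-1}}$ gives
\[
 B_p^{d-1}=P(S)\subseteq P(\Sphere_p)\subseteq\bigcup_{j=1}^{2^{k-1}}\bigl(Px_j+\varepsilon\,P(B_{\ell_q^d})\bigr)\subseteq\bigcup_{j=1}^{2^{k-1}}\bigl(Px_j+\varepsilon B_{\ell_q^{d-1}}\bigr).
\]
Taking the infimum over all admissible $\varepsilon$ yields $e_k(B_p^{d-1},\ell_q^{d-1})\le e_k(\Sphere_p,\ell_q^d)$, which is the claim.

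There is essentially no hard step here; the only point requiring a little care is to make sure the chosen section $S$ really surjects onto the full ball $B_p^{d-1}$ (so that the projected covering covers all of $B_p^{d-1}$), and to handle $p=\infty$ via a face rather than a graph. Everything else is the elementary fact that entropy numbers do not increase under a norm-one operator, specialized to a coordinate projection.
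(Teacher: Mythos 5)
Your proof is correct and follows the same route as the paper's: project a covering of $\Sphere_p$ down to a $(d-1)$-dimensional hyperplane using a coordinate projection, noting that the projection is norm-nonincreasing on $\ell_q^d$ and that $\Sphere_p$ projects onto all of $B_p^{d-1}$. The paper directly asserts $P_1(\Sphere_p)=B_p^{d-1}$ rather than building an explicit section, but this is the same idea made slightly more explicit on your part.
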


\begin{proof}
Let $\varepsilon=e_k(\Sphere_p, \ell_q^d)$ and choose an associated covering
\[
 \Sphere_p \subset \bigcup\limits_{j=1}^{2^{k-1}} (x_j+\varepsilon   B_q^d).
\]
Let $P_1$ be the projection in $\R^d$ onto the hyperplane $H_1$ setting the first coordinate to zero.
Now, identify $B_p^{d-1}$ and  $B_q^{d-1}$ with its natural isometric embedding into $H_1$.
Then
\[ P_1(B_q^d)=B_q^{d-1} \qquad \text{and} \qquad P_1( \Sphere_p) = B_p^{d-1} \]
and the linearity of $P_1$ imply
\[
 B_p^{d-1} =       P_1( \Sphere_p) 
           \subset \bigcup\limits_{j=1}^{2^{k-1}} \big(P_1 x_j+\varepsilon   P_1(B_q^d) \big)
           =       \bigcup\limits_{j=1}^{2^{k-1}} \big(P_1 x_j+\varepsilon   B_q^{d-1} \big).
\]
By definition of the entropy numbers this gives  $e_k(  B_p^{d-1}, \ell_q^{d-1}) \le \varepsilon$ and proves the theorem.
\end{proof}

The upper bounds in $(ii)$ of Lemma \ref{res:muv} directly follow from the
corresponding upper bounds for $B_p^d$ and the inclusion $\Sphere_p \subset B_p^d$.
We turn to the proof of the upper bound on the entropy numbers of $\Sphere_p$ for $k\ge d$ in Theorem \ref{thm:p-sphere}. 
The covering construction is based on the bijective shifting map
\begin{align*}
 \Delta_e^p: \quad   B_p^d \cap Q_e \cap H_i & \to \Sphere_p \cap Q_e \cap C_i,\\
  x & \mapsto x + s(x) e,
\end{align*}
which shifts $x$ by $0 \leq s(x) \leq 1$ along the diagonal $e$ until it hits the $p$-sphere. Since the vectors in the domain of $\Delta^p_e$ all have the $i$-th  coordinate equal to $0$, the shift $s(x)$ corresponds to the $i$-th  coordinate of $\Delta_e^p(x)$. Below we provide bounds on the difference between two shifts $s(x)$, $s(y)$ that depend on the $\ell_\infty$-distance of $x$ and $y$.

\begin{lemma}\label{res:Lip2}
Let $x,y \in   B_p^d \cap Q_e \cap H_i$ such that $|x| \leq |y|$. Then
\[
 s(y) \le s(x) \le s(y)+\|x-y\|_{\infty}.
\]
\end{lemma}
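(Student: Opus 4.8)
The plan is to reduce the statement to the behaviour of the single scalar function $\phi_x(t):=\|x+te\|_p$ for $t\ge 0$, since by construction $s(x)$ is precisely the unique value with $\phi_x(s(x))=1$. First I would record the structural fact that, because $x$ lies in the orthant $Q_e$ and has vanishing $i$-th coordinate, the $j$-th coordinate of $x+te$ has modulus exactly $|x_j|+t$ for every index $j$ and every $t\ge 0$; thus $\phi_x(t)=\big\|(|x_j|+t)_{j=1}^d\big\|_p$. From this explicit formula one reads off at once that $\phi_x$ is continuous, strictly increasing on $[0,\infty)$ (the $i$-th coordinate contributes a strictly increasing term to the (quasi-)norm while all other terms are non-decreasing), with $\phi_x(0)=\|x\|_p\le 1$ and $\phi_x(t)\to\infty$; hence $s(x)$, and likewise $s(y)$, is well defined.

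Next I would extract two comparisons between $\phi_x$ and $\phi_y$ from the hypotheses, using that the map $t\mapsto\big\|(|a_j|+t)_j\big\|_p$ is non-decreasing in each modulus $|a_j|$. Since $|x|\le|y|$ coordinatewise, this gives $\phi_x(t)\le\phi_y(t)$ for all $t\ge 0$. For the second comparison put $\delta:=\|x-y\|_\infty$; for each $j$ we have $\delta\ge|x_j-y_j|\ge|y_j|-|x_j|$, hence $|x_j|+t+\delta\ge|y_j|+t$ for all $t\ge 0$, and monotonicity again gives $\phi_x(t+\delta)\ge\phi_y(t)$ for all $t\ge 0$.

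Finally I would combine these with $\phi_x(s(x))=\phi_y(s(y))=1$. The first comparison yields $\phi_y(s(x))\ge\phi_x(s(x))=1=\phi_y(s(y))$, and strict monotonicity of $\phi_y$ forces $s(y)\le s(x)$. The second yields $\phi_x(s(y)+\delta)\ge\phi_y(s(y))=1=\phi_x(s(x))$, and strict monotonicity of $\phi_x$ then gives $s(x)\le s(y)+\delta=s(y)+\|x-y\|_\infty$. Together these are exactly the asserted chain of inequalities.

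I do not anticipate a real obstacle; the point that needs a moment's care is that the argument must avoid the triangle inequality (so that it survives the quasi-Banach range $0<p<1$) and must also cover $p=\infty$, but it only uses continuity and monotonicity of $t\mapsto\big\|(|a_j|+t)_j\big\|_p$, which are transparent from the explicit formula in both cases. It is also here that the shared-orthant assumption $x,y\in Q_e$ is essential: it is precisely what makes the moduli of the shifted coordinates equal $|x_j|+t$ and $|y_j|+t$, so that the two comparisons above hold coordinatewise.
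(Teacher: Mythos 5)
Your proof is correct and uses essentially the same mechanism as the paper's: both establish $s(y)\le s(x)$ from coordinatewise monotonicity of the $p$-norm, and both establish $s(x)\le s(y)+\|x-y\|_\infty$ by noting that replacing $y_j$ with $x_j+\|x-y\|_\infty$ can only increase the norm, then invoking monotonicity of the shift. The only cosmetic differences are that the paper first reduces to the positive orthant with $i=d$ by symmetry and writes the comparisons as explicit sums of $p$-th powers (stating that the $p=\infty$ case is an easy adaptation), whereas you keep the argument in terms of the single function $\phi_x(t)=\|x+te\|_p$, which handles $p<\infty$ and $p=\infty$ uniformly.
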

\begin{proof}
We give the argument for $p<\infty$, the proof is easily adapted for $p=\infty$.
By symmetry, it is enough to check the claim for the positive orthant associated with $e=(1,\dots,1)$ and $i=d$.
Then $x_d=y_d=0$ and the assumption $|x|\le|y|$ translates into $0 \le x_j\le y_j$ for $j=1,\dots,d-1$.
Moreover, $\sigma=s(x)\ge 0$ and $\tau=s(y)\ge 0$ are given as the unique nonnegative solutions of the equations
\[
 \sum_{j=1}^{d-1} (x_j+\sigma)^p + \sigma^p = 1 \qquad \text{and} \qquad 
 \sum_{j=1}^{d-1} (y_j+\tau)^p + \tau^p = 1.
\] 
Now $\tau\le\sigma$ directly follows from $0\le x_j\le y_j$ for $j=1,\dots,d-1$.
The remaining inequality $\sigma\le \|x-y\|_{\infty} + \tau$ is a consequence of
\begin{eqnarray*}
  \sum_{j=1}^{d-1} (x_j+\sigma)^p + \sigma^p 
	  &=&    \sum_{j=1}^{d-1} (y_j+\tau)^p + \tau^p \\
    &\le&  \sum_{j=1}^{d-1} \big(x_j+(y_j-x_j) + \tau\big)^p + \tau^p \\
		&\le&  \sum_{j=1}^{d-1} \big(x_j+\|x-y\|_{\infty} + \tau\big)^p + \big( \|x-y\|_{\infty} + \tau \big)^p.
\end{eqnarray*}
%
\end{proof}

Now it is easy to establish the following for $\Delta^p_e$:

\begin{lemma} \label{lem:xx}
For any $x,y \in   B_p^d \cap Q_e \cap H_i$, we have
\[
 \|\Delta^p_e(x) - \Delta^p_e(y)\|_{\infty} \leq 2\|x-y\|_{\infty}.
\]
\end{lemma}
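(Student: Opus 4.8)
The plan is to reduce the general case to the one handled in Lemma~\ref{res:Lip2}, where the coordinate-wise comparison $|x|\le|y|$ is available. Given arbitrary $x,y\in B_p^d\cap Q_e\cap H_i$, introduce the coordinate-wise maximum $z$ defined by $z_j:=\operatorname{sign}(e_j)\max\{|x_j|,|y_j|\}$ for $j\ne i$ and $z_i:=0$. Then $z\in Q_e\cap H_i$ and $|x|\le|z|$, $|y|\le|z|$; moreover $\|x-z\|_\infty\le\|x-y\|_\infty$ and $\|y-z\|_\infty\le\|x-y\|_\infty$ coordinate by coordinate. One subtlety: $z$ need not lie in $B_p^d$. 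I would handle this by observing that $\Delta_e^p$, and in particular the shift $s(\cdot)$, extends naturally to all of $Q_e\cap H_i$ (the defining equation $\sum_{j\ne i}(|x_j|+\sigma)^p+\sigma^p=1$ still has a unique nonnegative solution for any such point, since the left-hand side is continuous, strictly increasing in $\sigma$, equals $\|x\|_p^p$ at $\sigma=0$ when $\|x\|_p\le1$ but in general just needs $\sigma$ large enough), and that Lemma~\ref{res:Lip2} only used the equation and the inequality $0\le x_j\le y_j$, not membership in the ball — so the proof of that lemma goes through verbatim for the pair $(x,z)$ and the pair $(y,z)$ as long as $z\in Q_e\cap H_i$.

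With that in hand, Lemma~\ref{res:Lip2} (applied to the pairs with the smaller vector first) gives
\[
 s(z)\le s(x)\le s(z)+\|x-z\|_\infty\le s(z)+\|x-y\|_\infty
\]
and likewise $s(z)\le s(y)\le s(z)+\|x-y\|_\infty$. Subtracting, $|s(x)-s(y)|\le\|x-y\|_\infty$. Since $\Delta_e^p(x)-\Delta_e^p(y)=(x-y)+(s(x)-s(y))e$ and $\|e\|_\infty=1$, the triangle inequality for $\|\cdot\|_\infty$ yields
\[
 \|\Delta_e^p(x)-\Delta_e^p(y)\|_\infty\le\|x-y\|_\infty+|s(x)-s(y)|\le 2\|x-y\|_\infty,
\]
which is the claim.

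The main obstacle is the one flagged above: the auxiliary vector $z$ may fall outside $B_p^d$, so one cannot invoke Lemma~\ref{res:Lip2} as a black box. The cleanest fix is to note that the statement and proof of Lemma~\ref{res:Lip2} never actually require $x,y\in B_p^d$ — only that the shift is defined by the stated equation and that the comparison $|x|\le|y|$ holds — and then restate it (or remark in its proof) for general points of $Q_e\cap H_i$; alternatively, one could first reduce to $p<\infty$ and rescale, but the ``extend $s$ to the cone'' route is shorter. Everything else is a two-line triangle-inequality computation.
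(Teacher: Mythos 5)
The overall structure of your argument mirrors the paper's: reduce to the ordered case via an auxiliary vector comparable to both $x$ and $y$, apply Lemma~\ref{res:Lip2} twice, and finish with the triangle inequality. The difference — and it is a genuine gap, not a cosmetic one — is that you take the coordinate-wise \emph{maximum} $z_j = \sign(e_j)\max\{|x_j|,|y_j|\}$, whereas the paper takes the coordinate-wise \emph{minimum} $z_j = \sign(e_j)\min\{|x_j|,|y_j|\}$. Your choice forces $z$ outside of $B_p^d$ in general (e.g. $p=1$, $d=3$, $i=3$, $x=(1,0,0)$, $y=(0,1,0)$ gives $z=(1,1,0)$ with $\|z\|_1=2$), and the extension of the shift $s$ that you sketch does not actually repair this.

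Concretely, your claim that the equation $\sum_{j\ne i}(|z_j|+\sigma)^p+\sigma^p=1$ ``still has a unique nonnegative solution \dots in general just needs $\sigma$ large enough'' is false. The left-hand side is strictly increasing in $\sigma\ge 0$ and equals $\|z\|_p^p$ at $\sigma=0$; if $\|z\|_p>1$, it is already $>1$ at $\sigma=0$ and only grows, so there is \emph{no} nonnegative solution. One cannot allow $\sigma<0$ either: since $z\in H_i$, the $i$-th coordinate of $z+\sigma e$ becomes $\sigma<0$, so the shifted point leaves $Q_e$ and $\sigma^p$ is not even real-valued when $p$ is non-integer. Thus the ``extend $s$ to the cone'' route breaks down precisely at the point you flagged as a subtlety. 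The paper's choice of the coordinate-wise minimum avoids the problem entirely: $|z|\le|x|$ forces $\|z\|_p\le\|x\|_p\le 1$ (monotonicity of the $\ell_p$-quasi-norm on each orthant), so $z\in B_p^d\cap Q_e\cap H_i$ and Lemma~\ref{res:Lip2} applies as a black box, with the same $\|x-z\|_\infty\le\|x-y\|_\infty$, $\|y-z\|_\infty\le\|x-y\|_\infty$ estimates you use. If you switch $\max$ to $\min$, the rest of your write-up goes through unchanged and matches the paper.
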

\begin{proof}
For the moment, assume that $|x|\leq |y|$. The previous lemma and $|x|\leq |y|$ immediately give 
\[
 -\|x-y\|_{\infty} \leq |x_j|-|y_j| +s(x) - s(y) \leq \|x-y\|_{\infty}
\]
for every $j=1,\dots,d$.
Hence, we have $\|\Delta_e^p(x) - \Delta_e^p(y)\|_{\infty} \leq \|x-y\|_{\infty}$.

Now for arbitrary $x,y \in   B_p^d \cap Q_e \cap H_i$, let $z \in B_p^d \cap Q_e \cap H_i$ be the vector given by $z_j = \sign e_j \min(|x_j|,|y_j|)$ for $j=1,\dots,d$. But for this vector, both $|z|\leq|x|$ and $|z|\leq|y|$ hold true. Hence, a simple application of the triangle inequality yields $\|\Delta_e^p(x) - \Delta_e^p(y)\|_{\infty} \leq 2\|x-y\|_{\infty}$.
\end{proof}

\begin{theorem}\label{res:entropy_sphere}
Let $0<p,q\le \infty$ and $d \geq 2$. For $k \geq d$ it holds true that
\[
 e_k(\Sphere_p,\ell_q^d) \lesssim 2^{-\frac{k}{d-1}} d^{1/q-1/p}.
\] 
\end{theorem}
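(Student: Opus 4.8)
The plan is to split the argument according to the size of $k$ relative to $d$.

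\smallskip
\emph{Moderate range $d\le k\le d^2$.} Here the trivial inclusion $\Sphere_p\subset B_p^d$ already suffices. By Lemma~\ref{lem:schuett} (when $p>q$ one first writes $B_p^d\subseteq d^{1/q-1/p}B_q^d$ and invokes the case $p=q$ of that lemma) we have $e_k(B_p^d,\ell_q^d)\lesssim 2^{-k/d}d^{1/q-1/p}$ for $k\ge d$, hence by monotonicity $e_k(\Sphere_p,\ell_q^d)\le e_k(B_p^d,\ell_q^d)\lesssim 2^{-k/d}d^{1/q-1/p}$. Since $2^{-k/d}=2^{-k/(d-1)}2^{k/(d(d-1))}$ and $k/(d(d-1))\le d/(d-1)\le 2$ for $d\ge 2$ and $k\le d^2$, this is $\le 4\cdot 2^{-k/(d-1)}d^{1/q-1/p}$, which is the claim in this range.

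\smallskip
\emph{Main range $k>d^2$.} Now I would run the covering construction built from the shifting maps $\Delta_e^p$. Start from the decomposition $\Sphere_p=\bigcup_{e\in\{-1,1\}^d}\bigcup_{i=1}^d(\Sphere_p\cap Q_e\cap C_i)$, a cover by $2^d d$ sets: for $x\in\Sphere_p$ one has $x\in C_i$ for any index $i$ of a coordinate of smallest modulus, and $x\in Q_e$ for $e_j:=\sign x_j$ when $x_j\ne 0$ and $e_j:=1$ otherwise, because $\|x\|_\infty\le\|x\|_p=1$. Each piece equals $\Delta_e^p(D_{e,i})$ with $D_{e,i}:=B_p^d\cap Q_e\cap H_i$, and under the isometric identification of $H_i$ with $\ell_\infty^{d-1}$ one has $D_{e,i}\subset B_p^{d-1}$. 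Set $m:=k-d-\lceil\log_2 d\rceil$. The steps are: (i) cover $B_p^{d-1}$, hence $D_{e,i}$, by $2^{m-1}$ balls in $\ell_\infty^{d-1}$ of radius arbitrarily close to $e_m(B_p^{d-1},\ell_\infty^{d-1})$, then move the centres into $D_{e,i}$ at the cost of a factor $2$; (ii) apply $\Delta_e^p$ and invoke Lemma~\ref{lem:xx} to obtain, for each piece, a cover by $2^{m-1}$ balls in $\ell_\infty^d$ centred on $\Sphere_p$, of at most twice the radius of (i); (iii) pass from $\ell_\infty^d$ to $\ell_q^d$ via $\|\cdot\|_q\le d^{1/q}\|\cdot\|_\infty$, multiplying the radius by $d^{1/q}$. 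Assembling the $2^d d$ pieces yields a cover of $\Sphere_p$ by $2^d d\cdot 2^{m-1}\le 2^{k-1}$ balls in $\ell_q^d$ of radius $\lesssim e_m(B_p^{d-1},\ell_\infty^{d-1})\,d^{1/q}$.

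\smallskip
It remains to bound $e_m(B_p^{d-1},\ell_\infty^{d-1})$. The threshold $k>d^2$ is chosen exactly so that $m\ge d-1$, which places us in the last case of Lemma~\ref{lem:schuett} applied with parameters $p$, $\infty$ and dimension $d-1$: $e_m(B_p^{d-1},\ell_\infty^{d-1})\asymp 2^{-m/(d-1)}(d-1)^{-1/p}$. Now $2^{-m/(d-1)}=2^{-k/(d-1)}2^{(d+\lceil\log_2 d\rceil)/(d-1)}$, and the exponent $(d+\lceil\log_2 d\rceil)/(d-1)\le d/(d-1)+(\log_2 d+1)/(d-1)$ is bounded by an absolute constant for $d\ge 2$ (both summands are $\le 2$); together with $(d-1)^{-1/p}\asymp d^{-1/p}$ (constant depending only on $p$) this gives $e_k(\Sphere_p,\ell_q^d)\lesssim 2^{-k/(d-1)}d^{-1/p}d^{1/q}=2^{-k/(d-1)}d^{1/q-1/p}$, as desired.

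\smallskip
The bookkeeping in (i)--(iii) and the exponent arithmetic are routine. The only points needing genuine care are the verification that $k>d^2$ indeed forces $m\ge d-1$ (an elementary inequality, since $k\ge d^2+1$), so that Lemma~\ref{lem:schuett} may be applied to $B_p^{d-1}$ in $\ell_\infty^{d-1}$ at the correct rate; and, conceptually, the observation that the entire gain from $2^{-k/d}$ to the sharp $2^{-k/(d-1)}$ is already packaged in Lemma~\ref{lem:xx} — the maps $\Delta_e^p$ transfer coverings of the \emph{$(d-1)$-dimensional} ball $B_p^{d-1}$ to coverings of $\Sphere_p$ with only a bounded distortion of $\ell_\infty$-distances, so $\Sphere_p$ inherits the $(d-1)$-dimensional covering rate of the ball.
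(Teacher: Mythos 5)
Your proof is correct and follows essentially the same strategy as the paper: decompose $\Sphere_p$ into the pieces $\Sphere_p\cap Q_e\cap C_i$, pull each back via the shifting map $\Delta_e^p$ to a subset of the $(d-1)$-dimensional ball, transfer an $\ell_\infty$-covering forward using the bounded distortion from Lemma~\ref{lem:xx}, count $2^d\cdot d\cdot 2^{m-1}\le 2^{k-1}$, apply Lemma~\ref{lem:schuett} to $B_p^{d-1}$ in $\ell_\infty^{d-1}$, and factor $\ell_\infty\to\ell_q$. The only (harmless) deviation is in the complementary ``small $k$'' regime: you split at $k\le d^2$ and use only the inclusion $\Sphere_p\subset B_p^d$ together with Lemma~\ref{lem:schuett}, whereas the paper splits at $k<2d+\lceil\log d\rceil$ and invokes $e_d(\Sphere_p,\ell_\infty^d)\lesssim d^{-1/p}$ from Lemma~\ref{res:muv}(ii); your version is slightly more self-contained but otherwise equivalent.
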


\begin{proof}
We first treat the case $q=\infty$.
Let $\mathcal N_{e,i}(\varepsilon)$ be a minimal $\varepsilon$-covering of $  B_p^d \cap Q_e \cap H_i$ in $\ell_{\infty}^d$. 
Since $  B_p^d \cap Q_e \cap H_i$ is a subset of a $(d-1)$-dimensional $\ell_p$-ball, we have $$ \big| \mathcal N_{e,i}(\varepsilon) \big| \le N_{\varepsilon}(  B_p^{d-1},\ell^{d-1}_{\infty}).$$
Lemma \ref{lem:xx} implies that  the set $\Delta^p_e(\mathcal N_{e,i}(\varepsilon))$ is a $2\varepsilon$-covering of $\Sphere_p \cap Q_e \cap C_i$. 
Consequently,
\[
 \tilde{ \mathcal N} (2 \varepsilon) := \bigcup_{e \in \{-1,1\}^d} \bigcup_{1\le i\le d} \Delta_e^p(\mathcal N_{e,i}(\varepsilon))
\]
is a $2\varepsilon$-covering of $\Sphere_p$. 
Moreover 
\[ 
  \big|\tilde{\mathcal N}(2\varepsilon)\big| \leq \sum_{e \in \{-1,1\}^d} \sum_{1\le i\le d}  \big| \mathcal N_{e,i}(\varepsilon) \big| \leq 2^d \, d \, N_{\varepsilon}(  B_p^{d-1},\ell^{d-1}_{\infty}).
\] 
By definition of entropy numbers,
\[
 e_k(\Sphere_p,\ell_{\infty}^d) \leq 2 e_{k-d-\lceil \log d \rceil}(  B^{d-1}_p, \ell_{\infty}^{d-1}).
\]
Hence the assertion follows for $k \geq 2d + \lceil \log d \rceil$ immediately from the upper bound in the third case of Lemma \ref{lem:schuett}. For $d \leq k < 2d + \lceil \log d \rceil$, we have by monotonicity of entropy numbers and Lemma \ref{res:muv}, (ii) that $e_k(\Sphere_p,\ell_{\infty}^d) \leq e_d(\Sphere_p,\ell_\infty^d) \lesssim d^{-1/p}$; now, due to the assumption $k < 2d + \lceil \log d \rceil$ we may estimate $2^{-6} \leq 2^{-k/(d-1)}$ and hence the assertion holds true for all $k \geq d$.

The case of general $q$ is a consequence of the factorization property of entropy numbers, see for instance \cite[Section 1.3]{EdTr96},
\[
 e_k(\Sphere_p,\ell_q^d) \leq \| \id: \ell_\infty^d \to \ell_q^d \| \, e_k(\Sphere_p,\ell_{\infty}^d) = d^{1/q} \, e_k(\Sphere_p,\ell_{\infty}^d).
\] 
\end{proof}

\begin{remark}
For $p=q$, the \emph{Mazur map}
$$M_p:\Sphere_2 \to \Sphere_p, \; x \mapsto ((\sign x_j) |x_j|^{2/p})_{i=1}^d$$
provides a different technique to derive matching bounds for $p<1$ in Lemma \ref{res:muv}.
This mapping forms a homeomorphism between the $\ell_2^d$- and the $\ell_p^d$-sphere.
In particular, for $0<p<2$ the Mazur map is Lipschitz with constant $L=L(p)$ independent of the dimension, see \cite{Wes92}.
Hence, any $\eps$-covering of the $2$-sphere yields an $L\eps$-covering of the $p$-sphere by mapping the centers to $\Sphere_p$ with the Mazur map.
This shows that
$$ e_k(\Sphere_p,\ell_p) \le L  e_k(\Sphere_2,\ell_2), $$
which allows to transfer the upper bounds for the case $p=2$ in Lemma \ref{res:muv} to the case $0<p<1$.
\end{remark}

\section{Entropy numbers of spheres in general quasi-Ba\-nach spaces}\label{sec:general_spheres}
In this section, we elaborate on the methods presented in the previous section, generalizing them to arbitrary finite-dimensional quasi-Banach spaces. To this end, let $X$ be $\R^d$ equipped with some quasi-norm $\| \cdot  \|$.
Let $\Sph_X$ be the unit sphere in $X$. 
Let $X_i$ be the hyperplane $H_i$ considered as a subspace of $X$, i.e. its unit ball is
$B_X \cap H_i$.
Let $X^i$ be the hyperplane $H_i$ with the quasi-norm whose unit ball is the
image of $B_X$ under the orthogonal projection of $\R^d$ onto $H_i$. 
Observe that the quasi-norms of $X_i$ and $X^i$ coincide for $X=\ell_p^d$, $0<p\le\infty$.
That $X_i$ and $X^i$ are the same is always true if the orthogonal projection onto $H_i$ is a norm 1 projection, 
which is the case, in particular,  if $X$ is a Banach space. 

The following theorem provides a generalization of Theorem \ref{thm:lbp}.

\begin{theorem}
\label{thm:lbX}
Let $X$ and $Y$ be $\R^d$ with quasi-norms as above. 
For all natural numbers $k$, we have
\[
 e_k(\Sph_X, Y) \geq \max_{1 \le i \le d} e_k(  B_{X^i}, Y^i).
\]
\end{theorem}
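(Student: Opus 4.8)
The plan is to mimic the proof of Theorem \ref{thm:lbp}, replacing the single special coordinate (the first one) by an arbitrary coordinate $i$, and replacing the $\ell_q$-geometry by the general quasi-norm $Y$. Fix $i \in \{1,\dots,d\}$; it suffices to show $e_k(\Sph_X,Y) \ge e_k(B_{X^i},Y^i)$, since the maximum over $i$ then follows. Set $\varepsilon = e_k(\Sph_X,Y)$ and pick an associated covering $\Sph_X \subset \bigcup_{j=1}^{2^{k-1}} (x_j + \varepsilon B_Y)$. Let $P_i$ denote the orthogonal projection of $\R^d$ onto the hyperplane $H_i$.

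The two geometric facts I need are $P_i(B_Y) = B_{Y^i}$ and $P_i(\Sph_X) = B_{X^i}$. The first is exactly the definition of $Y^i$ (its unit ball is the image of $B_Y$ under orthogonal projection onto $H_i$). For the second: since $X$ is a quasi-normed space, the unit ball $B_X$ is star-shaped about the origin and contains a neighborhood of $0$, so every ray from the origin meets $\Sph_X$; hence $P_i(\Sph_X) = P_i(B_X)$, which by definition of $X^i$ equals $B_{X^i}$. (Here I should be a little careful: $P_i(\Sph_X) = P_i(B_X)$ requires that for each point of $B_X$ the segment to its radial projection onto the sphere stays inside $B_X$, i.e. star-shapedness, which holds because $\|tx\| = t\|x\|$ fails in general for quasi-norms — but it does hold for the positively homogeneous quasi-norms considered here, since every quasi-norm satisfies $\|tx\| = |t|\,\|x\|$; only the triangle inequality is relaxed. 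So star-shapedness is automatic and the identity holds.)

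With these two identities in hand, applying $P_i$ to the covering and using linearity of $P_i$ gives
\[
 B_{X^i} = P_i(\Sph_X) \subset \bigcup_{j=1}^{2^{k-1}} \big( P_i x_j + \varepsilon\, P_i(B_Y) \big) = \bigcup_{j=1}^{2^{k-1}} \big( P_i x_j + \varepsilon\, B_{Y^i} \big),
\]
which is a covering of $B_{X^i}$ by $2^{k-1}$ translates of $\varepsilon B_{Y^i}$ in $Y^i$. By the definition of entropy numbers this yields $e_k(B_{X^i},Y^i) \le \varepsilon = e_k(\Sph_X,Y)$. Taking the maximum over $i=1,\dots,d$ completes the proof.

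The argument is essentially routine; the only point demanding care — and the one I expect to be the main obstacle — is verifying $P_i(\Sph_X) = B_{X^i}$ rather than merely $P_i(\Sph_X) \subseteq B_{X^i}$, i.e. surjectivity onto the full projected ball. This hinges on the elementary observation that, for a quasi-norm, the unit sphere still intersects every ray through the origin, so that the radial projection of $B_X$ onto $\Sph_X$ is well-defined and surjective, whence $P_i(\Sph_X) = P_i(B_X) = B_{X^i}$. Everything else is a verbatim transcription of the proof of Theorem \ref{thm:lbp}.
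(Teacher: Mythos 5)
Your overall structure matches the paper's proof exactly: set $\varepsilon = e_k(\Sph_X,Y)$, take a covering, project by $P_i$, and use $P_i(B_Y) = B_{Y^i}$ and $P_i(\Sph_X) = B_{X^i}$ to obtain a covering of $B_{X^i}$ by translates of $\varepsilon B_{Y^i}$. This is the same argument the authors give, and it is correct.

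However, the justification you offer for the key identity $P_i(\Sph_X) = B_{X^i}$ — star-shapedness and the surjectivity of the radial projection $x \mapsto x/\|x\|$ — does not establish it. If $x \in B_X$, the radial image $x/\|x\| \in \Sph_X$ satisfies $P_i(x/\|x\|) = P_i(x)/\|x\|$, which is not $P_i(x)$ unless $\|x\| = 1$; so surjectivity of the radial map tells you nothing about $P_i(\Sph_X)$ versus $P_i(B_X)$. The correct elementary argument is to shift along $e_i$, not along rays through the origin: fix $z \in P_i(B_X)$, so $z + t_0 e_i \in B_X$ for some $t_0$, and consider the continuous function $f(t) = \|z + t e_i\|$. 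Since $f(t_0) \le 1$ and $f(t) \to \infty$ as $t \to \infty$, the intermediate value theorem gives a $t^*$ with $f(t^*) = 1$, so $z + t^* e_i \in \Sph_X$ and $P_i(z + t^* e_i) = z$; together with the trivial inclusion $P_i(\Sph_X) \subset P_i(B_X)$ this yields the equality. (The paper itself states $P_i(\Sph_X) = B_{X^i}$ without proof, so this is a refinement rather than a departure, but since you singled out this step as the crux, the flawed radial argument is worth correcting.)
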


\begin{proof}
The proof of Theorem \ref{thm:lbp} can be easily adapted to this situation. 
Let $\varepsilon=e_k(\Sph_X, Y)$ and choose an associated covering
\[
 \Sph_X \subset \bigcup\limits_{j=1}^{2^{k-1}} (x_j+\varepsilon   B_Y).
\]
Let $P_i$ be the orthogonal projection in $\R^d$ onto the hyperplane $H_i$ setting the first coordinate to zero.
Then we have
\[ P_i(B_Y)=B_{Y^i} \qquad \text{and} \qquad P_i( \Sph_X) = B_{X^i}. \]
Since $P_i$ is linear this implies
\[
 B_{X^i} =       P_i( \Sph_X) 
           \subset \bigcup\limits_{j=1}^{2^{k-1}} \big(P_i x_j+\varepsilon   P_i(B_Y) \big)
           =       \bigcup\limits_{j=1}^{2^{k-1}} \big(P_i x_j+\varepsilon   B_{Y^i} \big).
\]
By definition of the entropy numbers this gives  $e_k(  B_{X^i}, Y^i) \le \varepsilon$ and proves the theorem.
\end{proof}

To generalize the upper bounds, we have to assume a monotonicity property of the quasi-norm of $X$ in each orthant $Q_e$.
Let us call $X$ {\em monotone} if for any $e \in \{ -1,1\}^d$ and for $x,y \in Q_e$ with $|x|\le |y|$ we have $\|x\| \le \|y\|$.
Again, we define a bijective shifting map by
\begin{align*}
 \Delta_e^X: \quad   B_X \cap Q_e \cap H_i & \to \Sph_X \cap Q_e \cap C_i,\\
  x & \mapsto x + s(x) e,
\end{align*}
which shifts $x$ by $0 \leq s(x) \leq 1$ along the diagonal $e$ until it hits the sphere of $X$.
Then analogues of Lemmas \ref{res:Lip2} and \ref{lem:xx} hold true.

\begin{lemma} 
Let $x,y \in   X \cap Q_e \cap H_i$ such that $|x| \leq |y|$. Then
\[
 s(y) \le s(x) \le s(y)+\|x-y\|_{\infty}.
\]
\end{lemma}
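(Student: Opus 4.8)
The plan is to mimic the proof of Lemma \ref{res:Lip2} almost verbatim, replacing the explicit $\ell_p$-equations with the abstract monotonicity hypothesis on $X$. As before, by the symmetry of the situation under coordinate sign flips and permutations, it suffices to treat the positive orthant $e=(1,\dots,1)$ and the index $i=d$. Then $x_d=y_d=0$ and $|x|\le|y|$ becomes $0\le x_j\le y_j$ for $j=1,\dots,d-1$, and $\sigma=s(x)$, $\tau=s(y)$ are the unique nonnegative reals with $\|x+\sigma e\|=1$ and $\|y+\tau e\|=1$ (uniqueness because $t\mapsto\|x+te\|$ is strictly increasing on $[0,\infty)$ by monotonicity of $X$, together with the fact that it tends to $\infty$ and equals $\|x\|\le 1$ at $t=0$).

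For the first inequality $\tau\le\sigma$: since $x+\sigma e$ and $y+\sigma e$ both lie in the positive orthant and $|x+\sigma e|\le|y+\sigma e|$ coordinatewise, monotonicity gives $1=\|x+\sigma e\|\le\|y+\sigma e\|$, so $\|y+\sigma e\|\ge 1=\|y+\tau e\|$; since $t\mapsto\|y+te\|$ is increasing, this forces $\sigma\ge\tau$. For the second inequality $\sigma\le\tau+\|x-y\|_\infty$: put $\delta=\|x-y\|_\infty$, so that $y_j-x_j\le\delta$ for every $j$ (and $y_d-x_d=0\le\delta$). Then coordinatewise in the positive orthant
\[
 x_j + (\tau+\delta) \;=\; x_j + (y_j-x_j) + \tau + \big(\delta-(y_j-x_j)\big) \;\ge\; y_j + \tau,
\]
for $j=1,\dots,d-1$, and the $d$-th coordinate of $y+\tau e$ equals $\tau\le\tau+\delta$; hence $|y+\tau e|\le|x+(\tau+\delta)e|$ coordinatewise, so monotonicity yields $\|x+(\tau+\delta)e\|\ge\|y+\tau e\|=1=\|x+\sigma e\|$, and again monotonicity of $t\mapsto\|x+te\|$ gives $\tau+\delta\ge\sigma$, i.e. $s(x)\le s(y)+\|x-y\|_\infty$.

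The only genuine subtlety, and the place I would be most careful, is the well-definedness and uniqueness of the shift $s(x)$ — that is, that $t\mapsto\|x+te\|$ is \emph{strictly} increasing on $[0,\infty)$ rather than merely nondecreasing. Plain coordinatewise monotonicity only gives nondecreasing, so for strictness (and hence for the "increasing" deductions above to pin down $\sigma,\tau$ uniquely) one uses that $\Delta_e^X$ was already declared a bijection onto $\Sph_X\cap Q_e\cap C_i$; alternatively, wherever I concluded $s\ge t$ from $\|x+se\|\ge\|x+te\|$, the weaker statement "$\|x+se\|\ge 1$ implies $s\ge s(x)$" suffices and follows since $\|x+se\|$ is nondecreasing and $s(x)$ is the point where it first reaches $1$. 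Thus the argument goes through using only the monotonicity of $X$, with no quasi-triangle constant entering, exactly as in the $\ell_p$ case.
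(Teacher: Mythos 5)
Your argument is correct and fills in the detail that the paper explicitly leaves to the reader (``we leave the modifications of the proofs to the attentive reader''), and you also correctly flag and resolve the one genuine subtlety — that coordinatewise monotonicity of $X$ only gives that $t\mapsto\|x+te\|$ is nondecreasing, so one should define $s(x)$ as the first time the ray hits the sphere and use the weaker implication ``$\|x+se\|\ge 1\Rightarrow s\ge s(x)$''. The one thing to fix is the opening reduction: you invoke ``symmetry of the situation under coordinate sign flips and permutations'' to pass to $e=(1,\dots,1)$ and $i=d$, but a general monotone quasi-Banach space $X$ has no such symmetry (that was special to $\ell_p$). This is purely cosmetic, though — your computation uses nothing beyond the monotonicity hypothesis on the fixed orthant $Q_e$, and it carries over verbatim to an arbitrary $e$ and $i$ once you replace $x_j$, $y_j$ by $|x_j|$, $|y_j|$ (legitimate since for $x\in Q_e$ and $t\ge 0$ one has $|x_j+te_j|=|x_j|+t$). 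So simply drop the symmetry sentence and run the same estimates in the given orthant.
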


\begin{lemma} \label{lem:xxx}
For any $x,y \in   X \cap Q_e \cap H_i$, we have
\[
 \|\Delta_e^X(x) - \Delta_e^X(y)\|_{\infty} \leq 2\|x-y\|_{\infty}.
\]
\end{lemma}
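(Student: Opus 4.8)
The plan is to mimic the proof of Lemma~\ref{lem:xx} almost verbatim, the only new ingredient being that monotonicity of $X$ replaces the explicit equations for the $p$-norm that were available in Lemma~\ref{res:Lip2}. First I would prove the preceding lemma (the analogue of Lemma~\ref{res:Lip2}): assume $|x|\le|y|$, and reduce by symmetry to the positive orthant $e=(1,\dots,1)$ and $i=d$, so that $x_d=y_d=0$, $0\le x_j\le y_j$ for $j<d$, and $\sigma=s(x)$, $\tau=s(y)$ are characterized by $\|x+\sigma e\|=1=\|y+\tau e\|$ with $x+\sigma e,\ y+\tau e\in\Sph_X\cap Q_e$. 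The inequality $s(y)\le s(x)$ follows because $0\le x_j\le y_j$ forces $x+\sigma e\le y+\sigma e$ coordinatewise (both in $Q_e$), hence $\|x+\sigma e\|\le\|y+\sigma e\|=\|y+\tau e\|$ by monotonicity; since $\tau\mapsto\|y+\tau e\|$ is nondecreasing on $[0,\infty)$ (again monotonicity) and equals $1$ at $\tau$, we get $\sigma\le$ ... wait, rather: the shift function $t\mapsto\|x+te\|$ is nondecreasing, $=1$ at $t=\sigma$, and $\|x+\tau e\|\le\|y+\tau e\|=1$, so $\tau\le\sigma$, i.e.\ $s(y)\le s(x)$.

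For the remaining inequality $s(x)\le s(y)+\|x-y\|_\infty$, set $\delta=\|x-y\|_\infty$ and note that $x_j\le y_j+\delta\le (y_j+\tau)+\delta$ for each $j<d$ while $x_d=0\le \tau+\delta$, so coordinatewise $x+(\tau+\delta)e\ge$ ... more precisely $0\le x_j+(\tau+\delta)$ and $x_j+(\tau+\delta)\le y_j+\tau+\delta+\delta$; the clean comparison is: since $0\le x_j\le y_j+\delta$ we have $x_j+\tau+\delta\le y_j+\tau+2\delta$, which is awkward, so instead compare $x+\sigma e$ with $y+\tau e+\delta e$: for each $j<d$, $x_j+\sigma\le (y_j+\delta)+\sigma$, and we want $x_j+\sigma\le y_j+\tau+\delta$, i.e.\ $\sigma\le\tau+\delta+(y_j-x_j)$, which is what we are trying to prove in the first place — so the argument must go the other way. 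The correct route, exactly as in Lemma~\ref{res:Lip2}, is: suppose for contradiction $\sigma>\tau+\delta$; then for every $j<d$, $x_j+\sigma> x_j+\tau+\delta\ge x_j+\tau+(y_j-x_j)=y_j+\tau$ (using $\delta\ge y_j-x_j\ge 0$) and also $\sigma>\tau+\delta\ge\tau$, so coordinatewise $x+\sigma e> y+\tau e$ in $Q_e$ with strict inequality in the $d$-th coordinate; by monotonicity $1=\|x+\sigma e\|\ge\|y+\tau e\|=1$, which is not yet a contradiction — one needs strict monotonicity or a limiting argument. Here I would instead argue directly: $x+\sigma e\le y+(\tau+\delta)e$ coordinatewise? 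We have $x_j+\sigma$ vs $y_j+\tau+\delta$: not comparable in general. So the honest approach is the one in Lemma~\ref{res:Lip2}, replacing the explicit $p$-sum by the quasi-norm: from $x_j\le y_j+\delta$ we get $x_j+\tau\le y_j+\tau+\delta$, hence $x+\tau e+\delta e\ge$ ... I will follow the displayed chain of Lemma~\ref{res:Lip2}: $1=\|x+\sigma e\|$, and since $x_j+\sigma = x_j + s(x)$ we want to show $s(x)-s(y)\le\delta$; consider the point $y+(\tau+\delta)e$, which lies in $Q_e$ and satisfies $(y+(\tau+\delta)e)_j = y_j+\tau+\delta\ge x_j+\tau+\delta-\delta+\ldots$ — the key monotone comparison is $x + (\tau+\delta)e \le y + (\tau + \delta)e + \delta e$ is false; rather $x_j\le y_j+\delta$ gives $x_j+\tau+\delta\le y_j+\tau+2\delta$. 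I will present it as: $\|x+(\tau+\delta)e\|\ge\|x+\sigma e\|=1$ would give $\tau+\delta\ge\sigma$ by monotonicity of $t\mapsto\|x+te\|$; and $\|x+(\tau+\delta)e\|\le\|y+(\tau+\delta)e\|$ is false in general but $x_j+\tau+\delta$ vs $y_j+\tau$: since $x_j\le y_j$ (which holds! $x_j\le y_j$ because $|x|\le|y|$ — not just $x_j\le y_j+\delta$) we get $x_j+\tau+\delta\le y_j+\tau+\delta$ trivially, that's circular. The actual fact: $x_j\le y_j$ for all $j<d$, so $x_j+\tau+\delta\le y_j+\tau+\delta$; I need to compare with something known to have norm $1$, namely $y+\tau e$. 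We have $x_j+\tau+\delta$ vs $y_j+\tau$: $x_j+\tau+\delta\le y_j+\tau+\delta$, and I'd like $\le y_j+\tau$, which needs $\delta\le 0$. Dead end again — confirming that one genuinely needs the estimate structured as in Lemma~\ref{res:Lip2}. So I would write: \begin{quote} By monotonicity and $0\le x_j\le y_j$, $\;1=\|x+\sigma e\|\le\|y+\sigma e\|$, and since $t\mapsto\|y+te\|$ is nondecreasing and equals $1$ at $\tau$, $\tau\le\sigma$. For the other bound, put $\delta=\|x-y\|_\infty$; then $y_j\le x_j+\delta$, so $y+\tau e\le x+(\tau+\delta)e$ coordinatewise in $Q_e$, whence $1=\|y+\tau e\|\le\|x+(\tau+\delta)e\|$, and as $t\mapsto\|x+te\|$ is nondecreasing with value $1$ at $\sigma$, we get $\sigma\le\tau+\delta$. \end{quote} That is the clean argument: the roles of $x$ and $y$ are swapped relative to my first attempt, and $y_j\le x_j+\delta$ (not $x_j\le y_j+\delta$) is the right bound to use. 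This step — making sure the monotone comparison is set up in the correct direction, and that $t\mapsto\|x+te\|$ is monotone on $[0,\infty)$ for $x\in Q_e\cap H_i$ with $x+te\in Q_e$ — is the only place requiring care, and it is exactly where monotonicity of $X$ is essential (without it, both inequalities can fail).

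Given this lemma, Lemma~\ref{lem:xxx} follows by copying the proof of Lemma~\ref{lem:xx} word for word: if $|x|\le|y|$, the lemma just proved gives $-\delta\le |x_j|-|y_j|+s(x)-s(y)\le\delta$ for every $j$ (the cases $j\ne i$ using $|x_j|-|y_j|\in[-\delta,0]$ together with $0\le s(y)-s(x)$... rather $0\le s(x)-s(y)\le\delta$ wait: the lemma gives $s(y)\le s(x)$, i.e.\ $0\le s(x)-s(y)\le\delta$; for $j\ne i$, $|x_j|-|y_j|\le 0$ and $\ge -\delta$, so $|x_j|-|y_j|+s(x)-s(y)\in[-\delta,\delta]$; for $j=i$, $|x_i|=|y_i|=0$ so the quantity is $s(x)-s(y)\in[0,\delta]$), hence $\|\Delta_e^X(x)-\Delta_e^X(y)\|_\infty\le\delta$. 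For general $x,y$, introduce $z$ with $z_j=\sign e_j\,\min(|x_j|,|y_j|)$, note $|z|\le|x|$ and $|z|\le|y|$ with $z\in B_X\cap Q_e\cap H_i$ (using monotonicity to see $z\in B_X$), and apply the triangle inequality for $\|\cdot\|_\infty$ together with $\|x-z\|_\infty\le\|x-y\|_\infty$ and $\|y-z\|_\infty\le\|x-y\|_\infty$ to get the factor $2$. I expect no obstacle here beyond bookkeeping; the one-time conceptual cost was already paid in the orientation of the monotonicity comparison in the previous lemma. I would also remark that $\Delta_e^X$ is well-defined (the shift $s(x)\in[0,1]$ exists and is unique) precisely because $t\mapsto\|x+te\|$ is continuous, nondecreasing on the relevant interval, equals $\|x\|\le 1$ at $t=0$, and is $\ge 1$ at $t=1$ since $x+e$ dominates the standard basis direction $e$ which... this again uses monotonicity, so it is worth noting that monotonicity underpins both the definition of $\Delta_e^X$ and the estimate.
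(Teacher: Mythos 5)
Your final "clean" argument is correct and is precisely the adaptation the paper has in mind when it says the modifications of Lemmas \ref{res:Lip2} and \ref{lem:xx} are left to the reader: monotonicity of $X$ replaces the explicit $p$-power identities, the key comparison is $y+\tau e \le x+(\tau+\delta)e$ coordinatewise (using $y_j \le x_j+\delta$, not the wrong-direction bound you first tried), and the reduction to the case $|x|\le|y|$ via $z_j=\sign e_j\min(|x_j|,|y_j|)$ plus the triangle inequality for $\|\cdot\|_\infty$ yields the factor $2$ exactly as in Lemma \ref{lem:xx}. The many false starts you record before settling on this are not part of a proof and should be cut; you might also note explicitly that you are using that $s(x)$ is the unique (equivalently, smallest) $t\ge 0$ with $\|x+te\|=1$, which is implicit in the paper's stipulation that $\Delta_e^X$ is a bijection.
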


We leave the modifications of the proofs to the attentive reader. Now we obtain the following generalization of Theorem \ref{res:entropy_sphere}.
 
\begin{theorem}\label{thm:ubX}
Let $X$ and $Y$ be $\R^d$ with quasi-norms as above and assume that $X$ is monotone.
Let $k \geq d-\lceil \log d \rceil$. Then
\[
 e_k(\Sph_X,Y) \leq 2 \, \| Id: Y \to \ell_\infty^d \| \, \max_{1\le i \le d} e_{k-d-\lceil \log d \rceil}(  B_{X_i}, \ell_{\infty}^{d-1}).
\] 
\end{theorem}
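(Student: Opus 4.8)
The plan is to carry the volume-free covering construction from the proof of Theorem \ref{res:entropy_sphere} over to the present generality, treating the model target $\ell_\infty^d$ first and then reducing an arbitrary $Y$ to it. For the reduction I would invoke the factorization property of entropy numbers: since $\id:\ell_\infty^d\to Y$ is bounded, any $\varepsilon$-covering of $\Sph_X$ in $\ell_\infty^d$ is simultaneously a $\big(\|\id:\ell_\infty^d\to Y\|\,\varepsilon\big)$-covering in $Y$, whence $e_k(\Sph_X,Y)\le\|\id:\ell_\infty^d\to Y\|\,e_k(\Sph_X,\ell_\infty^d)$; for $Y=\ell_q^d$ this scalar equals $d^{1/q}$, in agreement with Theorem \ref{res:entropy_sphere}. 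So everything reduces, up to this factor, to the case $Y=\ell_\infty^d$.

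For $Y=\ell_\infty^d$ the argument is essentially a transcription of the proof of Theorem \ref{res:entropy_sphere}. Write $\Sph_X=\bigcup_{e\in\{-1,1\}^d}\bigcup_{1\le i\le d}\big(\Sph_X\cap Q_e\cap C_i\big)$, a union of $2^d\cdot d$ pieces; by construction of the diagonal shift one has $\Sph_X\cap Q_e\cap C_i=\Delta_e^X\big(B_X\cap Q_e\cap H_i\big)$, and $B_X\cap Q_e\cap H_i\subseteq B_{X_i}$ because $B_{X_i}=B_X\cap H_i$. Fix $\varepsilon>0$ and pick, for each pair $(e,i)$, a minimal $\varepsilon$-net $\mathcal N_{e,i}(\varepsilon)$ of $B_X\cap Q_e\cap H_i$ in $\ell_\infty^{d-1}$, so that $|\mathcal N_{e,i}(\varepsilon)|\le N_\varepsilon(B_{X_i},\ell_\infty^{d-1})$. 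Since $X$ is monotone, Lemma \ref{lem:xxx} applies and $\Delta_e^X$ is $2$-Lipschitz for the $\ell_\infty$-metric on both sides; hence $\Delta_e^X(\mathcal N_{e,i}(\varepsilon))$ is a $2\varepsilon$-net of $\Sph_X\cap Q_e\cap C_i$ in $\ell_\infty^d$, and the union over all $(e,i)$ is a $2\varepsilon$-net of $\Sph_X$ of cardinality at most $2^d\,d\,\max_{1\le i\le d}N_\varepsilon(B_{X_i},\ell_\infty^{d-1})$.

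It then remains to optimize in $\varepsilon$ and to read off the index. For $k$ with $m:=k-d-\lceil\log d\rceil\ge 1$, choose $\varepsilon$ slightly above $\max_i e_m(B_{X_i},\ell_\infty^{d-1})$, so that $N_\varepsilon(B_{X_i},\ell_\infty^{d-1})\le 2^{m-1}$ for every $i$; the net built above then has at most $2^d\,2^{\lceil\log d\rceil}\,2^{m-1}=2^{k-1}$ points, and passing to the infimum over admissible $\varepsilon$ yields $e_k(\Sph_X,\ell_\infty^d)\le 2\max_{1\le i\le d}e_{k-d-\lceil\log d\rceil}(B_{X_i},\ell_\infty^{d-1})$, which together with the first paragraph gives the claim when $k\ge d+\lceil\log d\rceil+1$. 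The remaining small values of $k$ (where $m\le 1$) are dispatched as in the corresponding part of the proof of Theorem \ref{res:entropy_sphere}: by monotonicity of entropy numbers $e_k(\Sph_X,Y)\le e_1(\Sph_X,Y)$, and a routine estimate using $\|\cdot\|_Y\le\|\id:\ell_\infty^d\to Y\|\,\|\cdot\|_\infty$ together with the observation that zeroing the coordinate of smallest modulus maps any point of $\Sph_X$ into some $B_{X_i}$ without decreasing its $\ell_\infty$-norm (here monotonicity of $X$ enters once more, using $d\ge 2$) gives $e_1(\Sph_X,Y)\le\|\id:\ell_\infty^d\to Y\|\max_i e_1(B_{X_i},\ell_\infty^{d-1})$, which is the required bound under the usual convention $e_j:=e_1$ for $j\le 1$.

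\textbf{The main obstacle.} The substance of the argument is not in the covering bookkeeping above but in the two unnumbered lemmas preceding the theorem --- the analogues of Lemmas \ref{res:Lip2} and \ref{lem:xx} --- which is exactly where the monotonicity hypothesis on $X$ is consumed: one must verify that the diagonal shift $\Delta_e^X$ is still well defined and bijective onto $\Sph_X\cap Q_e\cap C_i$ and still $2$-Lipschitz for $\|\cdot\|_\infty$. Granting those, the only genuinely delicate points are that the $2^d\cdot d$ orthant/coordinate pieces actually exhaust $\Sph_X$ (immediate for $\Sphere_p$, where $\Sphere_p\subset[-1,1]^d$, but for a general monotone $X$ one should read $Q_e$ as the full orthant, or first reduce to that case) and the accounting of the additive index shift $k\mapsto k-d-\lceil\log d\rceil$ together with its small-$k$ tail; the rest copies the $\Sphere_p$ proof line by line.
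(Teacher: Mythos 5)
Your argument is essentially the paper's: reduce to $Y=\ell_\infty^d$ by factorization, decompose the sphere by orthant $Q_e$ and small-coordinate set $C_i$, push minimal $\ell_\infty$-nets of $B_X\cap Q_e\cap H_i$ forward through the $2$-Lipschitz shift $\Delta_e^X$, count, and read off the index shift $k\mapsto k-d-\lceil\log d\rceil$. You are somewhat more careful than the paper at two points --- the need to read $Q_e$ as the full orthant (or assume $\Sph_X\subset[-1,1]^d$) so that the $(e,i)$-pieces actually exhaust $\Sph_X$, and the small-$k$ tail where $k-d-\lceil\log d\rceil\le 0$, which the paper's proof silently skips even though the stated range $k\ge d-\lceil\log d\rceil$ permits it --- but these are patches within the same strategy, not a different route.
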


\begin{proof}
At first, we treat the case $Y= \ell_{\infty}^d$.
Let $\mathcal N_{e,i}(\varepsilon)$ be a minimal $\varepsilon$-covering of $  B_X \cap Q_e \cap H_i$ in $\ell_{\infty}^d$. 
Since $ B_X \cap Q_e \cap H_i$ is a subset of a $B_{X_i}$, we have
$$ \big| \mathcal N_{e,i}(\varepsilon) \big| \le N_{\varepsilon}(  B_{X_i},\ell^{d-1}_{\infty}),$$
where $N_{\varepsilon}(  B_{X_i},\ell^{d-1}_{\infty})$ denotes the \emph{covering number}, that is the smallest possible number of balls $\varepsilon B_\infty^d$ necessary to cover $B_{X_i}$.
Lemma \ref{lem:xxx} implies that  the set $\Delta^X_e(\mathcal N_{e,i}(\varepsilon))$ is a $2\varepsilon$-covering of $\Sph_X \cap Q_e \cap C_i$. 
Consequently,
\[
 \tilde{ \mathcal N} (2\varepsilon) := \bigcup_{e \in \{-1,1\}^d} \bigcup_{1\le i\le d} \Delta_e^X(\mathcal N_{e,i}(\varepsilon))
\]
is a $2\varepsilon$-covering of $\Sph_X$. 
Moreover 
\[ 
  \big|\tilde{\mathcal N}(2\varepsilon)\big| \leq \sum_{e \in \{-1,1\}^d} \sum_{1\le i\le d}  \big| \mathcal N_{e,i}(\varepsilon) \big| \leq 2^d \, \sum_{i=1}^d \, N_{\varepsilon}(  B_{X_i},\ell^{d-1}_{\infty})
	\leq 2^d \, d \, \max_{1\le i \le d} \, N_{\varepsilon}(  B_{X_i},\ell^{d-1}_{\infty}).
\] 
By definition of entropy numbers,
\[
 e_k(\Sph_X,\ell_{\infty}^d) \leq 2 \, \max_{1\le i \le d} \, e_{k-d-\lceil \log d \rceil}(  B_{X_i}, \ell_{\infty}^{d-1}).
\]

The case of general $Y$ is a consequence of the factorization property of entropy numbers:
\[
 e_k(\Sph_X,Y) \leq \|  \id: \ell_\infty^d \to Y \| \,  e_k(\Sph_X,\ell_{\infty}^{d-1}).
\] 
\end{proof}

To conclude this note, let us discuss the situation that both $X$ and $Y$ are Banach spaces with norms being symmetric with respect to the canonical basis. Then it is possible to make the bounds in the Theorems \ref{thm:lbX} and \ref{thm:ubX} concrete by applying the results of Sch\"utt \cite{Sch84}. We call a Banach space $X$ with norm $\|\cdot\|$ \emph{symmetric} if the canoncial basis $\{e_1,\dots,e_d\}$ has the following property. For all permutations $\pi$, all sings $\varepsilon_i$, and all $x_i \in \R$, we have
\[ 
 \Big\|\sum_{i=1}^d \varepsilon_i x_{\pi(i)} e_i\Big\| = \Big\|\sum_{i=1}^d x_i e_i\Big\|.
\]
Below we use the notation
\[ 
 \lambda_E(k) = \Big\| \sum_{i=1}^k e_i \Big\|
\]
where $E$ denotes a $d$-dimensional, symmetric Banach space and $\{e_1,\dots,e_d\}$ its canonical basis.

\begin{corollary}\label{res:sym}
Let $X$ and $Y$ be symmetric Banach spaces and assume the canonical basis in both spaces to be normalized. Then, we have
\[ 
 e_k(\Sphere_X,Y) \geq \frac{1}{2e} \; \max_{\ell=k,\dots,d-1} \frac{\lambda_Y(\ell)}{\lambda_X(\ell)}
\]
for $k \leq d-1$ and
\[
 \frac{1}{e} \; 2^{-k/(d-1)} \; \frac{\lambda_Y(d-1)}{\lambda_X(d-1)} \leq e_k(\Sphere_X,Y) \leq 32c \; 2^{-k/(d-1)} \; \frac{\lambda_Y(d-1)}{\lambda_X(d-1)}
\]
for $k \geq 2d+\lceil \log d \rceil - 1$.
\end{corollary}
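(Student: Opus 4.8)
The plan is to combine the two general bounds from Theorems~\ref{thm:lbX} and~\ref{thm:ubX} with Schütt's estimates for the entropy numbers of balls $B_{X_i}$, $B_{X^i}$ in symmetric Banach spaces, and to exploit symmetry to make these estimates fully explicit in terms of the fundamental functions $\lambda_X$ and $\lambda_Y$. The first observation is that when $X$ is symmetric, the hyperplane section $X_i$ and the projection $X^i$ are the same $(d-1)$-dimensional space for every $i$, and this space is itself symmetric with fundamental function $\lambda_{X_i}(\ell) = \lambda_X(\ell)$ for $\ell \le d-1$; likewise for $Y$. So the $\max_{1\le i \le d}$ in both theorems is superfluous, and we are reduced to estimating $e_k(B_{X^\circ}, Y^\circ)$ for the single symmetric $(d-1)$-dimensional pair $X^\circ := X_1$, $Y^\circ := Y_1$.

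For the lower bound, Theorem~\ref{thm:lbX} gives $e_k(\Sph_X, Y) \ge e_k(B_{X^\circ}, Y^\circ)$. Schütt's theorem (see \cite{Sch84}) states that for two symmetric $n$-dimensional Banach spaces one has, for $k \le n$,
\[
 e_k(B_{X^\circ}, Y^\circ) \asymp \max_{k \le \ell \le n} \frac{\lambda_Y(\ell)}{\lambda_X(\ell)} \cdot \frac{1}{?}
\]
— more precisely, in the regime $\log n \le k \le n$ the correct quantity is of the order $\max_{\ell \ge k} \ell^{-1}\lambda_Y(\ell)/\lambda_X(\ell)$ times a suitable power, but the clean two-sided form that is actually needed here is the elementary one-sided estimate $e_k(B_{X^\circ},Y^\circ) \ge \tfrac{1}{2e}\,\lambda_Y(\ell)/\lambda_X(\ell)$ valid for every $k \le \ell \le n$, which follows from a volume/packing argument: a $2^{k-1}$-covering of $B_{X^\circ}$ restricted to the coordinate subspace spanned by $e_1,\dots,e_\ell$ cannot have radius smaller than $\tfrac{1}{2e}\lambda_Y(\ell)/\lambda_X(\ell)$ because the $\ell$-dimensional ball $\lambda_X(\ell)^{-1} B_{\ell_1^\ell \hookrightarrow X^\circ}$ sits inside $B_{X^\circ}$ and its $Y^\circ$-volume forces the bound. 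Taking $n = d-1$ and maximising over $\ell \in \{k,\dots,d-1\}$ yields the stated lower bound for $k \le d-1$. For $k \ge 2d + \lceil\log d\rceil - 1$ one instead uses the large-$k$ regime of Schütt's result, where $e_k(B_{X^\circ}, Y^\circ) \asymp 2^{-k/(d-1)} \operatorname{vol}(B_{Y^\circ})^{1/(d-1)}/\operatorname{vol}(B_{X^\circ})^{1/(d-1)}$, together with the fact that for symmetric spaces the volume ratio is comparable to $\lambda_Y(d-1)/\lambda_X(d-1)$ (up to absolute constants, since $\lambda_E(d-1)^{-1} B_1^{d-1} \subset B_{E^\circ} \subset \lambda_E(d-1)^{-1}(d-1) B_\infty^{d-1}$ — actually the sharper two-sided volume estimate $\operatorname{vol}(B_{E^\circ})^{1/(d-1)} \asymp \lambda_E(d-1)^{-1}$ holds for symmetric $E$ by a standard argument). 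This gives the lower bound in the second display.

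For the upper bound, Theorem~\ref{thm:ubX} (applicable since every Banach space is monotone and $\|\operatorname{Id}: Y \to \ell_\infty^d\| = 1$ when the canonical basis is normalised, because $|y_i| \le \|y_i e_i\| \le \|y\|$ by symmetry and the triangle inequality — indeed $|y_i| = \|y_i e_i\|$) yields $e_k(\Sph_X, Y) \le 2\, e_{k-d-\lceil\log d\rceil}(B_{X^\circ}, \ell_\infty^{d-1})$ for $k \ge d - \lceil\log d\rceil$. One then factors $e_{k'}(B_{X^\circ}, \ell_\infty^{d-1}) \le \|\operatorname{Id}: \ell_\infty^{d-1} \to Y^\circ\|\, e_{k'}(B_{X^\circ}, Y^\circ) \le \lambda_Y(d-1)\, e_{k'}(B_{X^\circ}, Y^\circ)$ — here one must check $\|\operatorname{Id}: \ell_\infty^{d-1}\to Y^\circ\| = \lambda_Y(d-1)$, which again is immediate from symmetry (the extreme points of $B_\infty^{d-1}$ are the sign vectors, all of norm $\lambda_Y(d-1)$ in $Y^\circ$). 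Wait — this would introduce an unwanted factor $\lambda_Y(d-1)$; so instead one keeps $\ell_\infty^{d-1}$ as the target and invokes Schütt's estimate in the form $e_{k'}(B_{X^\circ}, \ell_\infty^{d-1}) \asymp 2^{-k'/(d-1)} \operatorname{vol}(B_{\infty}^{d-1})^{1/(d-1)} / \operatorname{vol}(B_{X^\circ})^{1/(d-1)} \asymp 2^{-k'/(d-1)}\,\lambda_X(d-1)^{-1}$, which does not match the desired $\lambda_Y(d-1)/\lambda_X(d-1)$ unless $Y = \ell_\infty^{d-1}$. The correct route, therefore, is to apply Theorem~\ref{thm:ubX} directly with the target $Y$: re-run its proof with $Y^\circ$-coverings of $B_{X^\circ}$ instead of $\ell_\infty$-coverings (this is harmless since Lemma~\ref{lem:xxx} is about the $\ell_\infty$-norm of the shift, which can be estimated against any norm at the cost of $\|\operatorname{Id}:\ell_\infty^d \to Y\| = 1$), obtaining $e_k(\Sph_X, Y) \le 2\, e_{k-d-\lceil\log d\rceil}(B_{X^\circ}, Y^\circ)$, and then plug in Schütt's large-$k$ upper bound $e_{k'}(B_{X^\circ}, Y^\circ) \le c'\, 2^{-k'/(d-1)}\,\lambda_Y(d-1)/\lambda_X(d-1)$ valid for $k' \ge d-1$. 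Substituting $k' = k - d - \lceil\log d\rceil$ and noting that for $k \ge 2d + \lceil\log d\rceil - 1$ one has $k' \ge d-1$ and $2^{-k'/(d-1)} = 2^{(d+\lceil\log d\rceil)/(d-1)}\,2^{-k/(d-1)} \le 16\cdot 2^{-k/(d-1)}$ (since $(d+\lceil\log d\rceil)/(d-1) \le 4$ for $d \ge 2$), we get the constant $32c$ with $c = c'$ Schütt's constant. The main obstacle is bookkeeping: one must be careful that the target space in the upper-bound application is genuinely $Y$ (not $\ell_\infty$), that $X^\circ = X_1 = X^1$ and $Y^\circ = Y^1$ as symmetric spaces so Theorems~\ref{thm:lbX} and~\ref{thm:ubX} both reference the same object, that Schütt's volume ratios are correctly converted to $\lambda$-ratios for symmetric spaces, and that the index shift by $d + \lceil\log d\rceil$ only costs an absolute constant in the range $k \ge 2d + \lceil\log d\rceil - 1$.
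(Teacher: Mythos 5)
The lower bound half of your argument follows the paper's strategy (Theorem~\ref{thm:lbX} reduces to $e_k(B_{X_1},Y_1)$, which by symmetry has fundamental function $\lambda_X,\lambda_Y$ restricted to $\ell\le d-1$; one then cites Sch\"utt directly), though your attempt to rederive Sch\"utt's lower bound via an ad hoc volume/packing argument is vaguer than just citing \cite[Theorem 5]{Sch84} as the paper does. The genuine problem is in the upper bound.

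You assert $\|\id:\ell_\infty^d\to Y\|=1$. That is false: the quantity that equals $1$ when the basis is normalized and $Y$ is symmetric is $\|\id: Y\to\ell_\infty^d\|$, because $|y_i|\le\|y\|_Y$. In the \emph{opposite} direction one has $\|\id:\ell_\infty^d\to Y\|=\|(1,\dots,1)\|_Y=\lambda_Y(d)$, which can be as large as $d$ (e.g.\ $Y=\ell_1^d$). The proof of Theorem~\ref{thm:ubX} actually uses $\|\id:\ell_\infty^d\to Y\|$ (the statement of Theorem~\ref{thm:ubX} in the paper writes $\|Id:Y\to\ell_\infty^d\|$, but this is a typo, as the last line of its proof makes clear). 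Your proposed ``re-run Theorem~\ref{thm:ubX} with $Y^\circ$-coverings'' does not dispense with this factor: Lemma~\ref{lem:xxx} is an intrinsically $\ell_\infty$ estimate for the shift map $\Delta_e^X$, so even starting from a $Y^\circ$-covering of $B_X\cap Q_e\cap H_i$ one only obtains a $2\varepsilon$-covering of the image in $\ell_\infty$, and converting that to a covering in $Y$ costs the factor $\lambda_Y(d)$. Thus the correct consequence of your re-run is $e_k(\Sph_X,Y)\le 2\lambda_Y(d)\,e_{k'}(B_{X_1},Y_1)$; feeding in Sch\"utt's bound $e_{k'}(B_{X_1},Y_1)\lesssim 2^{-k'/(d-1)}\lambda_Y(d-1)/\lambda_X(d-1)$ would then produce the \emph{wrong} answer with an extra factor $\lambda_Y(d)$. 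Your final constant $32c$ only comes out right because the erroneous claim $\|\id:\ell_\infty^d\to Y\|=1$ silently cancels the $\lambda_Y(d)$ that should be there.

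The correct route, which the paper takes, is to keep $\ell_\infty^{d-1}$ as the target in Sch\"utt's estimate, i.e.\ use $e_m(B_{X_1},\ell_\infty^{d-1})\le c\,2^{-m/(d-1)}\lambda_{\ell_\infty^{d-1}}(d-1)/\lambda_{X_1}(d-1) = c\,2^{-m/(d-1)}/\lambda_X(d-1)$ (so $\lambda_Y$ does not yet appear), and then apply Theorem~\ref{thm:ubX} once with the single factor $\|\id:\ell_\infty^d\to Y\|=\lambda_Y(d)\le\lambda_Y(d-1)+\lambda_Y(1)\le 2\lambda_Y(d-1)$. Putting $m=k-d-\lceil\log d\rceil$, absorbing $2^{(d+\lceil\log d\rceil)/(d-1)}\le 8$ for $d\ge2$, gives $e_k(\Sph_X,Y)\le 32c\,2^{-k/(d-1)}\lambda_Y(d-1)/\lambda_X(d-1)$. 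In short: the factor $\lambda_Y(d-1)$ in the corollary must come from the norm of the identity $\ell_\infty^d\to Y$, not from the $Y$-side of Sch\"utt's estimate, and your proposal conflates the two directions of this identity.
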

\begin{proof}
Since $X$ is symmetric, so is $X_i$. Moreover, $X_i$ and $X_j$ are isometrically isomorphic for $i \neq j$ by the symmetry of $X$. 
The same holds true for $Y$. Hence, $e_k(B_{X_i},Y_i) = e_k(B_{X_j},Y_j)$ for $i \neq j$ and all $k$. 
By virtue of \cite[Theorem 5 (1)]{Sch84} we know that
\[ 
 e_k(B_{X_1},Y_1) \geq \frac{1}{2e} \; \max_{\ell=k,\dots,d-1} \frac{\lambda_{Y_1}(\ell)}{\lambda_{X_1}(\ell)}.
\]
Combining with Theorem \ref{thm:lbX} and noting that $\lambda_{X_1}(\ell) = \lambda_{X}(\ell)$ as well as $\lambda_{Y_1}(\ell) = \lambda_Y(\ell)$ for $\ell=1,\dots,d-1$, 
we obtain the stated lower bound for $k \leq d-1$.

Regarding the bounds for $k \geq 2d + \lceil \log d \rceil - 1$, we get from \cite[Theorem 5 (2)]{Sch84} that
\begin{equation}\label{eq:schuett1} 
 \frac{1}{e} \; 2^{-k/(d-1)} \; \frac{\lambda_{Y_1}(d-1)}{\lambda_{X_1}(d-1)} \leq e_k(B_{X_1},Y_1) \quad \text{ for } k \geq d-1
\end{equation}
as well as
\begin{equation}\label{eq:schuett2}
 e_m(B_{X_1},\ell_{\infty}^{d-1}) \leq c \; 2^{-m/(d-1)} \; \frac{\lambda_{\ell_{\infty}^{d-1}}(d-1)}{\lambda_{X_1}(d-1)} \quad \text{ for } m \geq d-1.
\end{equation}
Equation \eqref{eq:schuett1} and Theorem \ref{thm:lbX} immediately give the lower bound. For the upper bound, put $m = k - d - \lceil \log d \rceil$. Noting that $\|\id: Y \to \ell_{\infty}^d\| = \lambda_Y(d) \leq 2 \lambda_Y(d-1)$ and $\lambda_{\ell_{\infty}^{d-1}}(d-1)=1$, Theorem \ref{thm:ubX} combined with \eqref{eq:schuett2} then gives
\[ 
  e_k(\Sphere_X,Y) \leq 4 c \; 2^{(d+ \lceil \log d \rceil)/(d-1)} \; 2^{-k/(d-1)} \; \frac{\lambda_Y(d-1)}{\lambda_X(d-1)} \quad \text{ for } k \geq 2d + \lceil \log d \rceil - 1.
\]
Since we have assumed that $d \geq 2$, we can estimate $2^{(d+ \lceil \log d \rceil)/(d-1)} \leq 8$.
\end{proof}

\begin{remark}
In contrast to Theorem \ref{res:entropy_sphere}, the statement of Corollary \ref{res:sym} cannot be extended to $k \geq d$ in a simple way as we do not know how $e_d(\Sphere_X,Y)$ behaves.
\end{remark}

\begin{example}[Lorentz norms]
For $x \in \R^d$, let $x^* = (x_1^*,\dots,x_n^*)$ denote the non-increasing rearrangement of $x$. We consider \emph{generalized Lorentz norms}
\[ 
 \|x\|_{w,q} := \Big( \sum_{i=1}^d (w(i) |x_i^*|)^q\Big)^{1/q}
\]
for $0<q\leq \infty$ and a non-increasing function $w: [1,\infty) \to (0, \infty)$  
with $w(1) = 1$, $\lim_{t \to \infty} w(t) = 0$ and $\sum_{i=1}^{\infty} w(i) = \infty$. 
Then
\[ 
 \lambda_{w,q}(k) := \Big\|\sum_{i=1}^k e_i \Big\|_{w,q} = \Big(\sum_{i=1}^k w(i)^q\Big)^{1/q}.
\]
Particularly, for the choice $w(t) = t^{1/p - 1/q}$ for $0<q<p<\infty$ we obtain the standard Lorentz norm 
\[
 \|x\|_{w,q} = \|x\|_{p,q} = \big(\sum_{i=1}^d i^{q/p-1} |x_i^*|^q\big)^{1/q}.
\] 
A simple calculation, where one approximates the sum by an integral, shows that $\lambda_{p,q}(k) \asymp k^{1/p}$. So, if we consider $X=\ell_{p,q}^d$ and $Y = \ell_r^d$, we obtain from Corollary \ref{res:sym} that
\[ 
 e_k(\Sphere_{\ell_{p,q}},\ell_r^d) \asymp \frac{2^{-k/(d-1)}}{d^{1/p-1/r}} \asymp e_k(\Sphere_{\ell_p^d},\ell_r^d)
\]
for $k \geq 2d+\lceil \log d \rceil - 1$.
\end{example}

\begin{example}[Orlicz norms]
A convex function $M: \R_+ \to \R_+$ with $M(0)$ and $M(t) > 0$ for $t \neq 0$ is called an \emph{Orlicz function}, which we associate with the norm $\|\cdot\|_M$  
given by
\[ 
 \|x\|_M := \inf \{ \rho > 0: \sum_{i=1}^d M(|x_i|/\rho) \leq 1  \}.
\]
It is easy to calculate that
\[ 
 \lambda_M(k) := \lambda_{(\R^d,\|\cdot\|_M)}(k) = \frac{1}{M^{-1}(1/k)},
\]
where $M^{-1}$ is the inverse function of $M$. Let us consider some specific examples. In the following we always choose $Y=\ell_q^d$ for some $0<q\leq\infty$.

(i) Let $M(t) = \exp(-1/t^2)$ for $0\leq t < 1/2$. See \cite{GoEtAl02} for an application where the associated Orlicz norm appears naturally. 
We have $M^{-1}(t) = \sqrt{-\ln t}$ and consequently
 \[ 
  e_k(\Sphere_M,\ell_q^d) \asymp \sqrt{\ln d}\;  d^{1/q} \;  2^{-k/(d-1)}.
 \]

(ii) The following Orlicz function is taken from \cite{LinTza}. For $p > 1$ and $\alpha > 0$, let $M$ on $[0,t_0)$ be given by $M(t) = t^p \ln(1/t)^{\alpha}$. Here, $t_0 = 1/\exp(\beta + \sqrt{\beta^2 - \gamma})$ with $\beta = \alpha (2p-1)/(p^2-p)$ and $\gamma = (\alpha^2-\alpha)/(p^2-p)$. There is a $t_0' \leq t_0$ such that
 \[ 
  M^{-1}(y) \asymp_{p,\alpha} \left( \frac{y}{\log(1/y)^{\alpha}} \right)^{1/p} \qquad \text{ for } y < M(t_0').
 \]
 Hence, for $k$ sufficiently large, we find
 \[ 
  e_k(\Sphere_X, \ell_q^d) \asymp  \log(d)^{\alpha/p} \; d^{1/q-1/p}  \; 2^{-k/(d-1)}.
 \]
\end{example}

\end{document}